\documentclass{article}
\pdfoutput=1
\usepackage{amsthm}
\usepackage{graphicx}

\author{
Jean-Guillaume Dumas\footnote{
Universit\'e de Grenoble. 
Laboratoire LJK,
umr CNRS, INRIA, UJF, UPMF, GINP.
51, av. des Math\'ematiques, F38041 Grenoble, France.
}~\footnotemark[3], 
Cl\'ement Pernet\footnote{
Universit\'e de Grenoble.
Laboratoire LIG,
umr CNRS, INRIA, UJF, UPMF, GINP.
51, av. J. Kuntzmann, F38330 Montbonnot St-Martin, France.
}~\footnotemark[3]
and Ziad Sultan\footnotemark[1]~\footnotemark[2]
\footnote{
\href{mailto:Jean-Guillaume.Dumas@imag.fr}{Jean-Guillaume.Dumas@imag.fr},
\href{mailto:Clement.Pernet@imag.fr}{Clement.Pernet@imag.fr},
\href{mailto:Ziad.Sultan@imag.fr}{Ziad.Sultan@imag.fr}.
}
}

\newcommand{\strechparskip}[1]{}
\newcommand{\strechparsep}[1]{}
\newcommand{\customvspace}[1]{}

\newcommand{\breakalgorithm}{}
\newcommand{\breakalgorithmsinglecolumn}{\State\Comment{the trailing
    parts of the algorithm are shown on next pages}\breakalgorithm{}}

\newfont{\seaddfnt}{phvr8t at 8pt}

\usepackage{mdwlist}
\usepackage[utf8]{inputenc}
\usepackage{xspace}
\usepackage{amsmath,amssymb}
\usepackage{booktabs}
\usepackage{algorithm}
\usepackage{algpseudocode}
\algrenewcommand\algorithmicrequire{\textbf{Input:}}
\algrenewcommand\algorithmicensure{\textbf{Output:}}
\algrenewcommand\algorithmicreturn{\textbf{Return}}
\algrenewcommand\Return{\State\algorithmicreturn{} }%

\usepackage{multirow}
\newcommand{\pluq}{\texttt{PLUQ}\xspace}
\newcommand{\ple}{\texttt{PLE}\xspace}
\newcommand{\cupd}{\texttt{CUP}\xspace}

\newcommand{\trsm}{\texttt{TRSM}\xspace}
\newcommand{\MM}{\texttt{MM}\xspace}
\newcommand{\mm}{\frac{m}{2}}
\newcommand{\nn}{\frac{n}{2}}
\newcommand{\Z}{\ensuremath{\mathbb{Z}}\xspace}
\newcommand{\GO}[1]{\ensuremath{O\left(#1\right)}\xspace}
\newcommand{\po}[1]{\ensuremath{o\left(#1\right)}\xspace}
\newtheorem{theorem}{Theorem}
\newtheorem{lemma}{Lemma}

\newtheorem{corrolary}{Corrolary}
\newtheorem{remark}{Remark}

\renewcommand{\breakalgorithm}{\algstore{bkbreak}\end{algorithmic}\end{algorithm}
\begin{algorithm}[htbp]
\begin{algorithmic}\algrestore{bkbreak}}

\title{Simultaneous computation of the row and column rank profiles}

\makeatletter
\usepackage{color,svgcolor}
\usepackage{hyperref}
\hypersetup{
pdftitle={\@title},
pdfauthor={Jean-Guillaume Dumas, Clément Pernet and Ziad Sultan},
breaklinks=true, 
colorlinks=true,
 linkcolor=darkred,
 citecolor=blue,
 urlcolor=darkgreen,
}
\makeatother

\begin{document}
\maketitle

\abstract{
Gaussian elimination with full pivoting generates a PLUQ matrix decomposition. 
Depending on the strategy used in the search for pivots, the
permutation matrices can reveal some information about the row or the column
rank profiles of the matrix. We propose a new pivoting strategy that makes it
possible to recover at the same time both row and column rank profiles of the input matrix and of any  of its leading sub-matrices.
We propose a rank-sensitive and quad-recursive algorithm that computes
the latter PLUQ triangular decomposition of an $m\times n$ matrix of rank $r$ in
\GO{mnr^{\omega-2}} field operations, with $\omega$ the exponent of matrix
multiplication. 
Compared to the LEU decomposition by Malashonock, sharing a similar recursive
structure, its time complexity is rank sensitive and has a lower leading
constant.
Over a word size finite field, this algorithm also improveLs the practical
efficiency of previously known implementations.
}
\section{Introduction}
Triangular matrix decomposition is a fundamental building block in computational
linear algebra. It is used to solve linear systems, compute the rank,
the determinant, the nullspace or the row and column rank profiles of a matrix.
The LU decomposition, defined for matrices whose leading principal
minors are all nonsingular, can be generalized to arbitrary dimensions and ranks
by introducing pivoting on sides, leading e.g. to the LQUP decomposition
of~\cite{Ibarra:1982:LSP} or the PLUQ decomposition~\cite{GoVa96,Jeffrey:2010:lufact}.
Many algorithmic variants exist allowing fraction free computations
\cite{Jeffrey:2010:lufact}, in-place computations~\cite{jgd:2008:toms,JPS:2011}
or sub-cubic rank-sensitive time complexity~\cite{Storjohann:2000:thesis,JPS:2011}. 
More precisely, the pivoting strategy reflected by the permutation matrices
$P$ and $Q$ is the key difference between these PLUQ decompositions. 
In numerical linear algebra~\cite{GoVa96}, pivoting
is used to ensure a good numerical stability, good data locality, and reduce
the fill-in. In the context of exact linear algebra, the role of pivoting
differs. 
Indeed, only certain pivoting strategies for these decompositions will reveal
the rank profile of the matrix. The latter  is crucial
in many applications using exact Gaussian elimination, such as Gr\"obner basis
computations~\cite{F99a} and computational number
theory~\cite{stein2007modular}. 

The {\em row rank profile} of an $m\times n$ matrix with rank $r$ is a
lexicographically smallest sequence of $r$ row indices such that the
corresponding rows of the matrix are linearly independent. 
Similarly the {\em column rank profile} is a lexicographically smallest sequence
of $r$ column indices such that the corresponding rows of the matrix are
linearly independent.

The common strategy to compute the row rank profile is to search for pivots in
a row-major fashion: exploring the current row, then moving to the next row only
if the current row is zero. Such a PLUQ decomposition can be transformed into a
CUP decomposition (where $C=PL$ is in column echelon form) and the first
$r$ values of the permutation associated to $P$ are exactly the row rank
profile~\cite{JPS:2011}. A block recursive algorithm can be derived from this scheme by
splitting the row dimension~\cite{Ibarra:1982:LSP}.
Similarly, the column rank profile can be obtained in a column major search:
exploring the current column, and moving to the next column only if the
current one is zero. The PLUQ decomposition can be transformed into a PLE
decomposition (where $E=UQ$ is in row echelon form) and the first $r$ values of
$Q$ are exactly the column rank profile~\cite{JPS:2011}. The corresponding block
recursive algorithm uses a splitting of the column dimension.

Recursive elimination algorithms splitting both row and column dimensions
include the TURBO algorithm~\cite{jgd:2002:PComp} and the LEU
decomposition~\cite{Malaschonok:2010}. 
No connection is made to the computation of the rank profiles in any of them.
The TURBO algorithm does not compute the lower triangular matrix $L$ and
performs five recursive calls. It therefore implies an arithmetic overhead
compared to classic Gaussian elimination.
The LEU decomposition aims at reducing the amount of permutations and therefore also
uses many additional matrix products. As a consequence its time complexity
is not rank-sensitive.


We propose here a pivoting strategy following a Z-curve structure and
working on an incrementally growing leading sub-matrix. 
This strategy is first used in a recursive algorithm splitting both rows
and columns which recovers simultaneously both row and column rank profiles.
Moreover, the row and column rank profiles of any
leading sub-matrix can be deduced from the $P$ and $Q$ permutations.
We show that the arithmetic cost of this algorithm remains rank sensitive of the
form $O(mnr^{\omega-2})$ where $\omega$ is the exponent of matrix
multiplication. The best currently known upper bound for $\omega$ is
$2.3727$~\cite{Williams:2012:matmul}.
As for the CUP and PLE decompositions, this PLUQ decomposition can be computed
in-place. We also propose an iterative variant, to be used as a base-case.  
 

Compared to the CUP and PLE decompositions, this new algorithm has the following
new salient features: 
\customvspace{-1em}
 \begin{itemize}
\strechparskip{-3pt}
\strechparsep{-6pt}
 \item it computes {\em simultaneously} both rank profiles at the cost of one,
 \item it preserves the squareness of the matrix passed to the recursive calls,
  thus allowing more efficient use of the matrix multiplication building block,
 \item it reduces the number of modular reductions in a finite field,
\item a CUP and a PLE decompositions can be obtained from it, with row and
  column permutations only.
 \end{itemize} 
\customvspace{-1em}
Compared to the LEU decomposition, 
\customvspace{-1em}
\begin{itemize}
\strechparskip{-3pt}
\strechparsep{-6pt}
\item it is in-place,
\item its time complexity bound is rank sensitive and has a better leading constant,
\item a LEU decomposition can be obtained from it, with row and column
  permutations.
\end{itemize}
\customvspace{-1em}

In Section~\ref{sec:rec} we present the new block recursive algorithm. 
Section~\ref{sec:leu} shows the connection with the LEU
decomposition and section~\ref{sec:rp} states the main property about
rank profiles.
We then analyze the complexity of the new algorithm in terms
of arithmetic operations: first we prove that it is rank
sensitive in Section~\ref{sec:comp} and second we show in
section~\ref{sec:mod} that, over a finite field, it reduces the number of
modular reductions when compared to state of the art techniques.
We then propose an iterative variant in Section~\ref{sec:base} to be
used as a base-case to terminate the recursion before the dimensions get too
small. Experiments comparing computation time and cache efficiency are presented
in section~\ref{sec:exp}.

\section{A recursive PLUQ algorithm}\label{sec:rec}

\setlength{\arraycolsep}{.8\arraycolsep}
We first  recall the name of the main sub-routines being used: \MM
stands for matrix multiplication, \trsm for triangular system solving
with matrix unknown (left and right variants are implicitly indicated
by the parameter list), \texttt{PermC} for matrix column permutation,
\texttt{PermR} for matrix row permutation, etc. For instance, we will use:
\customvspace{-12pt}
\begin{description}
\strechparskip{0pt}
\strechparsep{-1pt}
\item[$\MM(C,A,B)$] to denote $C\leftarrow C-AB$,
\item[$\trsm(U,B)$] for $B\leftarrow
  U^{-1}B$ with $U$ upper triangular,
\item[$\trsm(B,L)$] for $B\leftarrow BL^{-1}$ with $L$ lower triangular.
\end{description}
\customvspace{-12pt}
We  also denote by $T_{k,l}$ the transposition of indices $k$ and $l$ and by 
$L\backslash U$, the storage of the two triangular matrices $L$ and $U$ one
above the other. 
Further details on these subroutines and notations can be found in~\cite{JPS:2011}.
In block decompositions, we  allow for zero dimensions. By convention,
the product of any $m\times 0$ matrix by an $0\times n$ matrix is  the
$m\times n$ zero matrix.

We now present the block recursive algorithm~\ref{alg:pluq:4rec},
computing a PLUQ decomposition. 

{\scriptsize
\begin{algorithm}[H]
  \caption{\pluq}
  \label{alg:pluq:4rec}
\begin{algorithmic}
\Require{$A=(a_{ij})$ a $m\times n$ matrix over a field}
\Ensure{$P, Q$:  $m\times m$ and $n\times n$ permutation matrices}
\Ensure{$r$: the rank of $A$}
\Ensure{$A \leftarrow
\begin{bmatrix}
    L \backslash U & V\\
    M              &0
  \end{bmatrix}$ where 
$L$ is $r\times r$ unit lower triangular, 
$U$ is $r\times r$ upper triangular,
and $$A= P \begin{bmatrix} L\\M \end{bmatrix} \begin{bmatrix} U&V \end{bmatrix} Q.$$
}
\If{m=1}
   \If{$A=\begin{bmatrix} 0& \ldots& 0\end{bmatrix}$}
      $P\leftarrow  \begin{bmatrix}  1 \end{bmatrix}, Q\leftarrow I_n,
      r\leftarrow 0$
   \Else
      \State $i\leftarrow$ the col. index of the first non zero elt. of $A$
      \State $P\leftarrow  \begin{bmatrix}  1 \end{bmatrix};Q\leftarrow T_{1,i},
      r\leftarrow 1$
      \State Swap $a_{1,i}$ and $a_{1,1}$
   \EndIf
   \Return $(P, Q, r, A)$
\EndIf
\If{n=1}
   \If{$A=\begin{bmatrix} 0& \ldots& 0\end{bmatrix}^T$}
       $P\leftarrow I_m ; Q\leftarrow  \begin{bmatrix}  1 \end{bmatrix}, 
      r\leftarrow 0$
   \Else
      \State $i\leftarrow$ the row index of the first non zero elt. of $A$
      \State $P\leftarrow  \begin{bmatrix}  1 \end{bmatrix}, Q\leftarrow T_{1,i},
      r\leftarrow 1$
      \State Swap $a_{i,1}$ and $a_{1,1}$
      \For{$j=i+1 \dots m$}
         $a_{j,1}\leftarrow a_{j,1} a_{1,1}^{-1}$
      \EndFor
   \EndIf
   \Return $(P, Q, r, A)$
\EndIf

\breakalgorithmsinglecolumn{}

\State \Comment{Splitting $A=
    \begin{bmatrix}
      A_{1}&A_{2}\\
      A_{3}&A_{4}\\
    \end{bmatrix}$ where $A_{1}$ is $\lfloor\frac{m}{2}\rfloor\times \lfloor\frac{n}{2}\rfloor$.}
  \State Decompose $A_{1} = P_1
  \begin{bmatrix}L_1\\M_1\end{bmatrix}\begin{bmatrix}U_1&V_1\end{bmatrix}
  Q_1$ \Comment{$\pluq(A_{1})$}
  \State $\begin{bmatrix} B_1\\B_2\end{bmatrix}
\leftarrow P_1^TA_{2}$ \Comment{$\texttt{PermR}(A_2,P_1^T)$}
  \State $
  \begin{bmatrix}
    C_1&C_2
  \end{bmatrix}
 \leftarrow A_{3}Q_1^T$ \Comment{$\texttt{PermC}(A_3,Q_1^T)$}
  \State Here $A =
    \left[\begin{array}{cc|c}
      L_1 \backslash U_1& V_1& B_1\\
      M_1               & 0  & B_2\\
      \hline
      C_1               & C_2& A_{4}\\
    \end{array}\right]$.
   \State $D\leftarrow L_1^{-1}B_1$ \Comment{$\trsm(L_1,B_1)$}
   \State $E\leftarrow C_1U_1^{-1}$ \Comment{$\trsm(C_1,U_1)$}
   \State $F\leftarrow B_2-M_1D$ \Comment{$\MM(B_2,M_1,D)$}
   \State $G\leftarrow C_2-EV_1$ \Comment{$\MM(C_2,E,V_1)$}
   \State $H\leftarrow A_4-ED$ \Comment{$\MM(A_4,E,D)$}
   \State Here $A=
     \left[\begin{array}{cc|c}
       L_1 \backslash U_1& V_1& D\\
       M_1               & 0  & F\\
       \hline
       E               & G& H\\
     \end{array}\right]$.
   
   \State Decompose $F =
   P_2\begin{bmatrix}L_2\\M_2\end{bmatrix}\begin{bmatrix}U_2&V_2\end{bmatrix}
   Q_2$ \Comment{$\pluq(F)$}
   \State Decompose $G =
   P_3\begin{bmatrix}L_3\\M_3\end{bmatrix}\begin{bmatrix}U_3&V_3\end{bmatrix}
   Q_3$ \Comment{$\pluq(G)$}
   \State $ \begin{bmatrix} H_1&H_2\\H_3&H_4 \end{bmatrix}
\leftarrow P_3^THQ_2^T$ \Comment{$\texttt{PermR}(H,P_3^T);\texttt{PermC}(H,Q_2^T)$}
   \State $\begin{bmatrix} E_1\\E_2 \end{bmatrix} \leftarrow  P_3^T E$ \Comment{$\texttt{PermR}(E,P_3^T)$}
   \State $\begin{bmatrix} M_{11}\\M_{12} \end{bmatrix} \leftarrow  P_2^T M_1$ \Comment{$\texttt{PermR}(M_1,P_2^T)$}
   \State $\begin{bmatrix} D_1&D_2\end{bmatrix} \leftarrow  DQ_2^T$ \Comment{$\texttt{PermR}(D,Q_2^T)$}
   \State $\begin{bmatrix} V_{11}&V_{12}\end{bmatrix} \leftarrow  V_1Q_3^T$ \Comment{$\texttt{PermR}(V_1,Q_3^T)$}

\breakalgorithm{}

\State Here $A=
    \left[\begin{array}{ccc|ccc}
      L_1 \backslash U_1& V_{11}&V_{12}& D_1 &D_2\\
      M_{11}               & 0  &0& L_2\backslash U_2 & V_2\\
      M_{12}               & 0  &0& M_2 & 0\\
      \hline
      E_1               & L_3\backslash U_3&V_3& H_1 &H_2\\
      E_2               & M_3              &0  & H_3 &H_4\\
    \end{array}\right]$.
   \State $I\leftarrow H_1U_2^{-1}$ \Comment{$\trsm(H_1,U_2)$}
   \State $J\leftarrow L_3^{-1}I$ \Comment{$\trsm(L_3,I)$}
    \State $K\leftarrow H_3U_2^{-1}$ \Comment{$\trsm(H_3,U_2)$}
    \State $N\leftarrow L_3^{-1}H_2$ \Comment{$\trsm(L_3,H_2)$}
    \State    $O\leftarrow N-JV_2$ \Comment{$\MM(N,J,V_2)$}
    \State $R\leftarrow H_4-KV_2-M_3O$ \Comment{$\MM(H_4,K,V_2);\MM(H_4,M_3,O)$}
    \State  Decompose $ R =
    P_4\begin{bmatrix}L_4\\M_4\end{bmatrix}\begin{bmatrix}U_4&V_4\end{bmatrix}
    Q_4$ \Comment{$\pluq(R)$}
    \State $ \begin{bmatrix}
      E_{21}& M_{31} & 0 & K_1\\
      E_{22}& M_{32} & 0 & K_2\\
    \end{bmatrix} 
    \leftarrow  P_4^T  \begin{bmatrix} E_{2}& M_{3} & 0 & K\\ \end{bmatrix}$ \Comment{$\texttt{PermR}$}
    \State $\begin{bmatrix} D_{21}&D_{22}\\ V_{21}&V_{22}\\ 0&0 \\ O_1&O_2 \end{bmatrix}
            \leftarrow\begin{bmatrix} D_{2}\\ V_{2}\\ 0 \\ O\end{bmatrix} Q_4^T$ \Comment{$\texttt{PermC}$}

    \State Here $A=
    \left[\begin{array}{ccc|cccc}
      L_1 \backslash U_1& V_{11}&V_{12}& D_1 &D_{21}&D_{22}\\
      M_{11}               & 0  &0& L_2\backslash U_2 & V_{21}&V_{22}\\
      M_{12}               & 0  &0& M_2 & 0&0\\
      \hline
      E_1               & L_3\backslash U_3&V_3& I &O_1 &O_2\\
      E_{21}               & M_{31}              &0  & K_1 & L_4\backslash U_4 &V_4\\
      E_{22}               & M_{32}              &0  & K_2 & M_4 &0\\
    \end{array}\right]$.

\State $S\leftarrow \begin{bmatrix}
  I_{r_1+r_2}\\
  &&I_{k-r_1-r_2}\\
  &I_{r_3+r_4}\\
  &&&&I_{m-k-r_3-r_4}
\end{bmatrix}$
\State $T\leftarrow \begin{bmatrix}
  I_{r_1}\\
        &      &          &I_{r_2} & \\
        &I_{r_3}&          &       & \\
        &      &          &       & I_{r_4}\\
        &      &I_{k-r_1-r_3}\\
        &      &          &       &       & I_{n-k-r_2-r_4}\\
\end{bmatrix}$

 \State $P\leftarrow \text{Diag}(
   P_1
   \begin{bmatrix}
     I_{r_1}\\&P_2
   \end{bmatrix},P_3
   \begin{bmatrix}
     I_{r_3}\\&P_4
   \end{bmatrix} ) S$
 \State $Q\leftarrow
T\text{Diag}(\begin{bmatrix}
     I_{r_1}\\&Q_3
   \end{bmatrix}
 Q_1,\begin{bmatrix}
     I_{r_2}\\&Q_4
   \end{bmatrix}
 Q_2)$

 
\State $A\leftarrow S^TAT^T$ \Comment{$\texttt{PermR}(A,S^T); \texttt{PermC}(A,T^T)$}
\State Here $A=
\begin{bmatrix}
  L_1\backslash U_1 &D_1              &V_{11} &D_{21} & V_{12}&D_{22}\\
    M_{11}          &L_2\backslash U_2 & 0 &       V_{21} & 0 & V_{22}\\ 
     E_1           &       I         &L_3\backslash U_3 & O_1 & V_3&O_2 \\
     E_{21}         &K_1              & M_{31}            & L_4\backslash U_4     &0&V_4 \\
     M_{12}         & M_2             & 0 &0 &0& 0\\
     E_{22}         & K_2             & M_{32}           & M_4 & 0 &0\\
\end{bmatrix}$
\Return $(P,Q,r_1+r_2+r_3+r_4,A)$
\end{algorithmic}
\end{algorithm}
}

It is based on a splitting of the matrix in four quadrants. A first recursive
call is done on the upper left quadrant followed by a series of updates. Then
two recursive calls can be made on the anti-diagonal quadrants if the first
quadrant exposed some rank deficiency.
After a last series of updates, a fourth recursive call is done on the bottom
right quadrant.
Figure~\ref{fig:pluq:rec} illustrates the position of the blocks computed in the
course of algorithm~\ref{alg:pluq:4rec}, before and after the final permutation
with matrices $S$~and~$T$.
\begin{figure}[ht]\centering
\includegraphics[width=\linewidth]{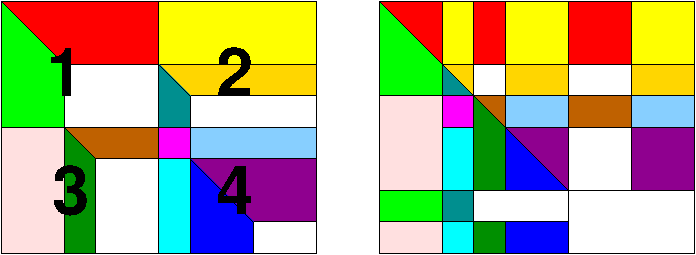}
\caption{Block recursive Z-curve PLUQ decomposition and final block permutation.}
\label{fig:pluq:rec}
\customvspace{-3pt}
\end{figure}

This framework differs from the one in~\cite{jgd:2002:PComp} by the order in
which the quadrants are treated, leading to only four recursive calls in this
case instead of five in~\cite{jgd:2002:PComp}. We will show in
section~\ref{sec:rp} that this fact together with the special form of the block
permutations $S$ and $T$ makes it possible to recover rank profile information. 
The correctness of algorithm~\ref{alg:pluq:4rec} is proven in appendix~\ref{app:pluq:correct}.
\begin{remark}
Algorithm~\ref{alg:pluq:4rec} is in-place (as defined
in~\cite[Definition 1]{JPS:2011}):
all operations of the \trsm, \MM,
\texttt{PermC}, \texttt{PermR} subroutines work with $O(1)$ extra memory
allocations except possibly in the course of fast matrix multiplications.
The only constraint is for the computation of $J\leftarrow L_3^{-1}I$ which
would overwrite the matrix $I$ that should be kept  for the final output. 
Hence a copy of $I$ has to  be stored for the computation of $J$. The matrix $I$
has dimension $r_3\times r_2$ and can be stored transposed in the zero block of
the upper left quadrant (of dimension $(\mm-r_1)\times (\nn-r_1)$, as
shown on Figure~\ref{fig:pluq:rec}).
\end{remark}
%

\section{From PLUQ to LEU}\label{sec:leu}

\renewcommand{\arraystretch}{0.91}
\setlength{\arraycolsep}{.12\arraycolsep}

We now show how to compute the LEU decomposition of~\cite{Malaschonok:2010}
from the PLUQ decomposition. The idea is to write 
$$P
\begin{bmatrix} L\\M \end{bmatrix}
\begin{bmatrix}U&V\end{bmatrix}Q= 
\underbrace{
P
\begin{bmatrix}
  L&0\\
  M&I_{m-r}
\end{bmatrix}
P^T}_{\overline{L}} 
\underbrace{P
\begin{bmatrix}
  I_r\\&0
\end{bmatrix}
Q}_{E}
\underbrace{
 Q^T
\begin{bmatrix}
  U&V\\
  & I_{n-r}
\end{bmatrix}
Q}_{\overline{U}}$$
 and show that $\overline{L}$ and $\overline{U}$ are respectively lower and
 upper triangular. This is not true in general, but turns out to be
 satisfied by the $P,L,M,U,V$ and $Q$ obtained in
 algorithm~\ref{alg:pluq:4rec}.%
\setlength{\arraycolsep}{.13\arraycolsep}
\begin{theorem}
Let $A=P
\begin{bmatrix}
  L\\M
\end{bmatrix}
\begin{bmatrix}
  U&V
\end{bmatrix}
Q$ be the \pluq decomposition computed by algorithm~\ref{alg:pluq:4rec}.
Then for any unit lower triangular matrix $Y$ and any upper triangular matrix $Z$,
the matrix
$  P
  \begin{bmatrix}
    L \\
    M&Y
  \end{bmatrix}
P^T$ is  unit lower triangular and
$Q^T
\begin{bmatrix}
  U&V\\&Z
\end{bmatrix}Q$
is upper triangular.
\end{theorem}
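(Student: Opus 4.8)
The plan is to prove the two claims by induction on the recursive structure of Algorithm~\ref{alg:pluq:4rec}, since the matrices $P, Q, L, M, U, V$ are built recursively from the outputs of the four sub-calls together with the explicit block permutations $S$ and $T$. I would treat the two statements symmetrically: it suffices to prove that $P\left[\begin{smallmatrix}L\\M&Y\end{smallmatrix}\right]P^T$ is unit lower triangular for every unit lower triangular $Y$, and the claim about $Q^T\left[\begin{smallmatrix}U&V\\&Z\end{smallmatrix}\right]Q$ follows by the transpose/reversal symmetry of the algorithm (left--right and up--down roles of rows and columns are interchanged by the symmetric construction of $S,T$ and of the two anti-diagonal recursive calls). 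So the core of the work is the lower-triangular statement.

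First I would set up the induction. The base cases $m=1$ and $n=1$ are immediate: there $P$ is either $[1]$ or $I_m$, $L$ and $M$ are trivial, and the claim reduces to checking that a single column of multipliers below a $1$ gives a lower triangular matrix, which it does by construction. For the inductive step I would unfold the definitions of $P$ and of the combined matrix $\left[\begin{smallmatrix}L\\M\end{smallmatrix}\right]$ as assembled in the final block form of $A$. The key structural observation to exploit is the special shape of $S$: it is a permutation that only moves the zero-rows (the rank-deficient rows of the first and third quadrant blocks) downward past later pivot rows, while preserving the relative order of the pivot rows. This means that conjugation by the composite permutation $P = \mathrm{Diag}(\dots)\,S$ sends the block-assembled matrix to one whose rows and columns are reindexed consistently, and the induction hypothesis applied to $P_1,P_2,P_3,P_4$ guarantees each diagonal sub-block is already unit lower triangular.

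The main technical content, and the step I expect to be the hardest, is verifying that the \emph{off-diagonal} blocks land strictly below the diagonal after conjugation by $P$ and that no entry is pushed above it. Concretely, I would write the big matrix $\left[\begin{smallmatrix}L\\M\end{smallmatrix}\right]$ in its six-row block form (the final displayed $A$ before extracting the triangular pieces), fill in the free parameter block $Y$ in the lower-right $(m-r)\times(m-r)$ zero region, and then track where each of the blocks $E_1,E_{21},E_{22},M_{11},M_{12},M_{31},M_{32},K_1,K_2,M_2,M_4$ is sent by the row and column actions of $P$ and $P^T$. The argument is essentially combinatorial: one must check that every nonzero block has its row index (after applying $S$ and the diagonal part) strictly greater than its column index. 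I would organize this by the partial order induced by the Z-curve pivoting — pivots discovered earlier in the recursion receive smaller indices under $P$ — so that the three recursive ``update'' blocks ($M$-parts) and the solved blocks ($E$, $K$) sit in the correct triangular region. The role of the hypotheses on $Y$ (unit lower triangular) and $Z$ (upper triangular) is only to fill the residual free diagonal block without disturbing triangularity, so they enter at the very end.

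Once the block-by-block placement is established, the conclusion is immediate: a block-triangular matrix whose diagonal blocks are (unit) lower triangular is itself (unit) lower triangular, and the unit diagonal of $L$ together with the unit choice of $Y$ gives the unit property. I would then invoke the symmetric argument to dispatch the upper-triangular claim for $Q^T\left[\begin{smallmatrix}U&V\\&Z\end{smallmatrix}\right]Q$, noting that $T$ plays for columns exactly the role $S$ plays for rows, and that the $V$-blocks and $U$-blocks occupy the transposed positions. The delicate point throughout is keeping the bookkeeping of the four different index ranges $r_1,r_2,r_3,r_4$ and the splitting point $k=\lfloor m/2\rfloor$ consistent between the permutation definitions and the assembled block matrix.
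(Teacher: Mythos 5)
Your overall strategy --- induction on the recursive structure, exploiting the special shape of $S$ and $T$ and the sub-permutations $P_1,\dots,P_4$ --- is the same as the paper's, and your intuition that $S$ only demotes the rank-deficient rows is correct. But there is a genuine gap, located precisely where you dismiss the hypotheses on $Y$ and $Z$ as entering ``only at the very end.'' The universal quantification over $Y$ is not cosmetic: it is the induction strengthening that makes the proof close. In the paper's argument one writes $Y=\left[\begin{smallmatrix}Y_1\\Y_2&Y_3\end{smallmatrix}\right]$ with $Y_1$ unit lower triangular of dimension $k-r_1-r_2$, observes that $S\left[\begin{smallmatrix}L\\M&Y\end{smallmatrix}\right]S^T$ is block lower triangular with $\left[\begin{smallmatrix}L_2\\M_2&Y_1\end{smallmatrix}\right]$ and $\left[\begin{smallmatrix}L_4\\M_4&Y_3\end{smallmatrix}\right]$ occupying exactly the index ranges acted on by $P_2$ and $P_4$, applies the induction hypothesis to those two calls to obtain unit lower triangular matrices $\overline{L_2}$ and $\overline{L_4}$, and only then applies the induction hypothesis to $P_1$ and $P_3$ with $\overline{L_2}$ and $\overline{L_4}$ playing the role of the arbitrary ``$Y$.'' Without the quantifier over $Y$ this second, outer application is impossible, because the matrix sitting in the ``$Y$ slot'' of the $P_1$ call is not a sub-block of the input but the already-conjugated $\overline{L_2}$. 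Your proposal, which applies the induction hypothesis to the four calls in parallel and lets $Y$ enter only as a final filler, does not account for this nesting and therefore does not close the induction as described.

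Relatedly, your plan to verify triangularity by tracking each individual block ($E_1, M_{11}, K_1,\dots$) and checking that its row index exceeds its column index after conjugation does not work at that granularity: the $P_i$ are arbitrary permutations, so conjugation scatters the rows and columns of an off-diagonal block rather than translating it, and such a block has no well-defined ``position'' afterwards. The workable granularity is the nested $2\times2$ block structure: a block lower triangular matrix whose diagonal blocks are unit lower triangular is unit lower triangular, and the off-diagonal blocks need only remain inside the lower-left quadrant, which they do because $P$ factors as $\mathrm{Diag}(\cdot,\cdot)\,S$ with the block-diagonal part respecting the quadrant boundary. Two smaller points: the paper proves the statement for $Q$ explicitly rather than by symmetry (the algorithm is not literally self-transpose --- $L$ is unit while $U$ is not --- so your symmetry shortcut needs at least a sentence of justification); and the base case $m=1$, $r=1$ for the $Q$ statement is not entirely immediate --- one must use that $Q=T_{1,i}$ with $i$ the column of the leading nonzero entry of the row, so that conjugation places that row in position $i$ with its leading coefficient on the diagonal.
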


\begin{proof}
Proceeding by induction, we assume that the theorem is true on all four
recursive calls, and show that it is true for the matrices 
$P
  \left[\begin{smallmatrix}
    L \\
    M&Y
  \end{smallmatrix}\right]
P^T$
and 
$Q^T
  \left[\begin{smallmatrix}
    U&V \\
    &Z
  \end{smallmatrix}\right]
Q$.
Let $Y=
\begin{bmatrix}
  Y_1\\Y_2&Y_3
\end{bmatrix}$
where $Y_{1}$ is unit lower triangular of dimension $k-r_1-r_2$.
From~the correctness of algorithm~\ref{alg:pluq:4rec} (see e.g. Equation~\ref{eq:PL}),
$  S
  \begin{bmatrix}
    L\\
    M&Y
  \end{bmatrix}
  S^T
  =
      \left[
        \begin{array}{ccccccc}
          L_1  \\
          M_{11} & L_2\\
          M_{12} & M_2 &Y_1&       \\
          \hline
          E_1   &  I  &&L_3\\
          E_{21} &K_1 & &M_{31}  & L_4\\
          E_{22} &K_2 & Y_2&M_{32}  & M_4&Y_3\\
        \end{array}
      \right]
 $
%

Hence $ P \left[\begin{matrix}L \\M&Y\end{matrix}\right] P^T $ equals
  \begin{equation*}
    \begin{split}
      \begin{bmatrix}P_1\\&P_3 \end{bmatrix}
      \begin{bmatrix} I_{r_1}\\&P_2\\&&I_{r_3}\\&&&P_4 \end{bmatrix}
      \left[
        \begin{array}{ccccccc}
          L_1  \\
          M_{11} & L_2\\
          M_{12} & M_2 &Y_1&       \\
          \hline
          E_1   &  I  &&L_3\\
          E_{21} &K_1 & &M_{31}  & L_4\\
          E_{22} &K_2 & Y_2&M_{32}  & M_4&Y_3\\
        \end{array}
      \right] \times
\\
    \begin{bmatrix}  I_{r_1}\\&P_2^T\\&&I_{r_3}\\&&&P_4^T \end{bmatrix}
    \begin{bmatrix}  P_1^T\\&P_3^T\end{bmatrix}
   \end{split}
   \end{equation*}
By induction hypothesis, the matrices
$\overline{L_2}=P_2 \begin{bmatrix}L_2\\M_2&Y_1\end{bmatrix}P_2^T,$
$\overline{L_4}=P_4 \begin{bmatrix}L_4\\M_4&Y_3\end{bmatrix}P_4^T$
,
$P_1
\begin{bmatrix}
  L_1\\
  M_1&\overline{L_2}
\end{bmatrix}
P_1^T
$
and
$P_3
\begin{bmatrix}
  L_3\\
  M_3&\overline{L_4}
\end{bmatrix}
P_3^T
$ are unit lower triangular. Therefore the matrix  $P
\left[\begin{smallmatrix}
  L\\
  M&Y
\end{smallmatrix}\right]
P^T$ is also unit lower triangular.

Similarly, let $Z=
\begin{bmatrix}
  Z_1&Z_2\\&Z_3
\end{bmatrix}$
where $Z_1$ is upper triangular of dimension $k-r_1-r_2$.
The matrix $ T^T \begin{bmatrix}U&V \\&Z\end{bmatrix}T$ equals
\begin{eqnarray*}
  T^T \left[
        \begin{array}{ccc|cccc}
          U_1  & V_{11} & V_{12} &D_1 &D_{21}&D_{22}\\
                &0     & 0     &U_2 &V_{21}&V_{22}\\
                &U_3   & V_3    & 0   &O_1 & O_2\\
                &      & 0     &    & U_4& V_4\\
                &      & Z_1    &    &   & Z_2\\
                &      &        &    &   &Z_3\\
        \end{array}
      \right]
  &=& \left[
        \begin{array}{ccc|cccc}
          U_1  & V_{11} & V_{12} &D_1 &D_{21}&D_{22}\\
                &U_3   & V_3    &    &O_1 & O_2\\
                &      & Z_1    &    &   & Z_2\\
                &0     & 0     &U_2 &V_{21}&V_{22}\\
                &      & 0     &    & U_4& V_4\\
                &      &        &    &   &Z_3\\
        \end{array}
      \right]
\end{eqnarray*}
Hence $ Q^T\left[\begin{matrix}U&V \\&Z\end{matrix}\right]Q $ equals
  \begin{equation*}
    \begin{split}
 \begin{bmatrix} Q_1^T\\&Q_2^T\end{bmatrix}
    \begin{bmatrix} I_{r_1}\\&Q_3^T \\&&I_{r_2}\\&&&Q_4^T \end{bmatrix}
    \left[
        \begin{array}{ccc|cccc}
          U_1  & V_{11} & V_{12} &D_1 &D_{21}&D_{22}\\
                &U_3   & V_3    &    &O_1 & O_2\\
                &      & Z_1    &    &   & Z_2\\
                &0     & 0     &U_2 &V_{21}&V_{22}\\
                &      & 0     &    & U_4& V_4\\
                &      &        &    &   &Z_3\\
        \end{array}
      \right]\times \\
      \begin{bmatrix} I_{r_1}\\&Q_3\\&& I_{r_2}\\&&&Q_4 \end{bmatrix}
   \begin{bmatrix}Q_1\\&Q_2 \end{bmatrix}.
\end{split}
\end{equation*}
By induction hypothesis, the matrices
$\overline{U_3}=Q_3^T \begin{bmatrix}U_3&V_3\\&Z_1\end{bmatrix}Q_3$,
$\overline{U_4}=Q_4^T \begin{bmatrix}U_4&V_4\\&Z_3\end{bmatrix}P_4^T$
, $Q_1^T \begin{bmatrix}  U_1&V_1\\     &\overline{U_3} \end{bmatrix} Q_1$
and
$Q_2^T\begin{bmatrix}  U_2&V_2\\  &\overline{U_4}\end{bmatrix}Q_2$ are upper triangular.
Consequently the matrix  $Q^T\left[\begin{smallmatrix}  U&V\\  &Z
\end{smallmatrix}\right]Q$ is upper triangular.

For the base case with $m=1$. The matrix $\overline{L}$ has dimension $1\times
1$ and is unit lower triangular. If $r=0$, then $\overline{U}=I_n^T Z I_n$ is
upper triangular. If $r=1$, then $Q=T_{1,i}$ where $i$ is the column index of
the pivot and is therefore the column index of the  leading coefficient of the row
$\begin{bmatrix}U&V\end{bmatrix}Q$. Applying $Q^T$ on the left only swaps rows 1
  and $i$, hence row $\begin{bmatrix}U&V\end{bmatrix}Q$ is the $i$th row of
    $Q^T\begin{bmatrix}  U&V\\&Z\end{bmatrix}Q$. The latter is
therefore upper triangular. The same reasoning can be applied to the case $n=1$.
%
\end{proof}
\begin{corrolary}\label{cor:leu}
  Let $\overline{L} = P
  \begin{bmatrix}
    L\\M&I_{m-r}
  \end{bmatrix}P^T, 
E=P
\begin{bmatrix}
  I_r\\&0
\end{bmatrix}
Q$ and $\overline{U} = Q^T
  \begin{bmatrix}
    U&V\\&0
  \end{bmatrix}Q
  $.
Then $A=\overline{L}E\overline{U}$ is a LEU decomposition of $A$.
\end{corrolary}


\begin{remark}
The converse is not always possible: given $A=L,E,U$, there are
several ways to choose the last $m-r$ columns of $P$ and the last $n-r$ rows of
$Q$. The LEU algorithm does not keep track of these parts of
the permutations.
\end{remark}


\section{Computing the rank profiles}
\label{sec:rp}

We prove here the main feature of the PLUQ decomposition computed by
algorithm~\ref{alg:pluq:4rec}: it reveals the row and column rank profiles of all leading
sub-matrices of the input matrix.
%
We recall in Lemma~\ref{lem:rankprofiletransf} basic properties verified by the
rank profiles.
\begin{lemma} For any matrix,
\label{lem:rankprofiletransf}
\begin{enumerate}\customvspace{-5pt}\strechparsep{-1pt}\strechparskip{0pt}
\item\label{point:rrp} the row rank profile is preserved by right multiplication
  with an invertible matrix and by left multiplication with an invertible upper
  triangular matrix.
\item\label{point:crp} the column rank profile is preserved by left multiplication with an
  invertible matrix and by right multiplication with an invertible lower
  triangular matrix.
\customvspace{-5pt}\end{enumerate}
\end{lemma}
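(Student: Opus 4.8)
The plan is to reduce both points to the ranks of the leading blocks of $A$ and to exploit the characterization of the profiles through them. An index $i$ lies in the row rank profile of $A$ exactly when $\operatorname{rank}(A_{1\ldots i,\,*})>\operatorname{rank}(A_{1\ldots i-1,\,*})$; equivalently, the profile is produced greedily by scanning the rows from the top and keeping each one that is not in the span of the preceding rows. Hence the whole row rank profile is determined by the sequence $\rho_i=\operatorname{rank}(A_{1\ldots i,\,*})$, $i=1,\ldots,m$, and for point~\ref{point:rrp} it suffices to show that each of the two operations leaves every $\rho_i$ unchanged. Point~\ref{point:crp} is the transpose statement, governed by the column invariants $\operatorname{rank}(A_{*,\,1\ldots j})$.

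First I would settle the invertible factor by arguing directly on linear dependencies. For an invertible matrix $B$ and any set $I$ of row indices, $\sum_{i\in I}c_i(AB)_i=\bigl(\sum_{i\in I}c_iA_i\bigr)B$, and since $x\mapsto xB$ is a bijection of the row space this vanishes if and only if $\sum_{i\in I}c_iA_i=0$. Thus $A$ and $AB$ have the same linearly independent sets of rows, every $\rho_i$ is preserved, and so is the row rank profile. The same reasoning applied to columns (equivalently, transposition) shows that left multiplication by an invertible matrix preserves the column rank profile.

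For the triangular factor I would work through the invariants $\rho_i$. Writing the left factor as $T$ and splitting it conformally with the partition ``first $i$ rows / remaining rows'', the crux is a closure property: row $i$ of $TA$ is a linear combination involving no row of $A$ of index larger than $i$, so the first $i$ rows of $TA$ lie in the span of the first $i$ rows of $A$; and since the leading $i\times i$ block of $T$ is triangular with nonzero diagonal, hence invertible, the reverse inclusion holds as well. The two leading blocks then share the same row space and the same rank $\rho_i$ for every $i$, so the row rank profile is preserved, and the column case follows by transposition. The step I expect to be the real obstacle is exactly this closure property, for it is where the shape of the triangular factor is decisive: requiring that row $i$ of $TA$ involve no row of larger index forces $T$ to be lower triangular (dually, the admissible right factor for the column profile is upper triangular). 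This is precisely the shape of the factors $L$ and $U$ produced by Algorithm~\ref{alg:pluq:4rec}, so with this orientation the lemma applies directly to the \pluq decomposition.
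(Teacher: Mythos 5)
Your proof is correct, and there is in fact nothing in the paper to compare it against: the lemma is only ``recalled'' as a basic fact, and no proof of it appears anywhere in the text. Your route --- reducing the profile to the leading-block ranks $\rho_i=\operatorname{rank}(A_{1\ldots i,\,*})$ via the greedy characterization, handling the invertible factor by preservation of row dependencies, and handling the triangular factor by equality of the row spaces of corresponding leading blocks --- is the natural argument, and every step of it is sound.

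Moreover, the obstacle you flagged is a genuine error in the printed statement, not a difficulty of your own making: the lemma as written is false. Left multiplication by an invertible \emph{upper} triangular matrix does not preserve the row rank profile: for $A=\begin{bmatrix}0\\1\end{bmatrix}$ and $T=\begin{bmatrix}1&1\\0&1\end{bmatrix}$ one gets $TA=\begin{bmatrix}1\\1\end{bmatrix}$, whose row rank profile is $(1)$ rather than $(2)$. An upper triangular left factor preserves the ranks of the \emph{trailing} row blocks, not the leading ones on which the profile depends. The correct orientation is the one your closure property forces --- invertible \emph{lower} triangular on the left for the row profile, and dually invertible \emph{upper} triangular on the right for the column profile --- and it is the only orientation consistent with how the paper actually uses the lemma in the proof that follows it, where the factor stripped off on the left, $\overline{L_1}$, is an invertible lower triangular matrix and the factor stripped off on the right, $\overline{U_1}$, is an invertible upper triangular one. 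In short, the printed statement has ``upper'' and ``lower'' swapped, and the version you proved is the one the paper needs.
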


  

\begin{lemma}
  Let $A=PLUQ$ be the \pluq decomposition computed by algorithm~\ref{alg:pluq:4rec}.
  Then the row (resp. column) rank profile of any leading $(k,t)$ submatrix of
  $A$ is the row (resp. column) rank profile of the leading $(k,t)$ submatrix
  of $P
  \begin{bmatrix}
    I_r\\&0
  \end{bmatrix}Q
  $.
\end{lemma}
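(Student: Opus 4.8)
The plan is to use the LEU connection from Corollary~\ref{cor:leu} together with the rank-profile invariance properties of Lemma~\ref{lem:rankprofiletransf}. The key object is the $0/1$ matrix $\Pi = P\left[\begin{smallmatrix}I_r\\&0\end{smallmatrix}\right]Q$, which by construction has exactly $r$ nonzero entries (a single $1$ in each of $r$ rows and $r$ columns) and plays the role of the echelon structure $E$ in the $\overline{L}E\overline{U}$ factorization. The statement claims that for every leading $(k,t)$ window, $A$ and $\Pi$ have identical row rank profiles and identical column rank profiles. The natural approach is to show that passing from $A$ to $\Pi$ amounts to left- and right-multiplications that preserve these profiles, and crucially that these multiplications also respect the leading $(k,t)$ block structure.

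First I would write $A = \overline{L}\,\Pi\,\overline{U}$ where, by the Theorem, $\overline{L}$ is unit lower triangular ($m\times m$) and $\overline{U}$ is upper triangular ($n\times n$); both are invertible. Hence $\Pi = \overline{L}^{-1} A\,\overline{U}^{-1}$. By Lemma~\ref{lem:rankprofiletransf}, left multiplication by the invertible matrix $\overline{L}^{-1}$ preserves the column rank profile and right multiplication by the invertible upper triangular $\overline{U}^{-1}$ preserves the row rank profile; conversely $A = \overline{L}\,\Pi\,\overline{U}$ recovers both profiles of the full matrices from $\Pi$. So for the \emph{full} matrices, $A$ and $\Pi$ share both rank profiles. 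The real content of the lemma is that this equality persists when we truncate to an arbitrary leading $(k,t)$ submatrix, and for that I need to understand how $\overline{L}$ and $\overline{U}$ act on leading blocks.

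The central observation is that $\overline{L}$ being lower triangular means its leading $k\times k$ principal submatrix is itself lower triangular and invertible, and acts on rows $1,\dots,k$ using only those same rows; dually $\overline{U}$ being upper triangular means its leading $t\times t$ block is upper triangular and invertible and acts on columns $1,\dots,t$ using only those columns. Concretely I would partition $\overline{L}=\left[\begin{smallmatrix}\overline{L}_{11}&0\\ \overline{L}_{21}&\overline{L}_{22}\end{smallmatrix}\right]$ with $\overline{L}_{11}$ the $k\times k$ leading block, and similarly $\overline{U}=\left[\begin{smallmatrix}\overline{U}_{11}&\overline{U}_{12}\\ 0&\overline{U}_{22}\end{smallmatrix}\right]$ with $\overline{U}_{11}$ the $t\times t$ leading block. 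A block computation then shows that the leading $(k,t)$ submatrix of $A=\overline{L}\,\Pi\,\overline{U}$ equals $\overline{L}_{11}\,\Pi_{(k,t)}\,\overline{U}_{11}$, where $\Pi_{(k,t)}$ is the leading $(k,t)$ submatrix of $\Pi$: the triangular shapes guarantee that rows beyond $k$ and columns beyond $t$ of $\Pi$ do not contribute to the leading $(k,t)$ window of the product. Since $\overline{L}_{11}$ is invertible (unit lower triangular) and $\overline{U}_{11}$ is invertible upper triangular, Lemma~\ref{lem:rankprofiletransf} applies again, now to the $(k,t)$ submatrices, yielding that the leading $(k,t)$ submatrix of $A$ and that of $\Pi$ have the same row rank profile and the same column rank profile.

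The main obstacle I anticipate is justifying cleanly that the leading $(k,t)$ block of the triple product factors as $\overline{L}_{11}\,\Pi_{(k,t)}\,\overline{U}_{11}$ with no cross-terms from the off-diagonal blocks. This is precisely where the triangularity established in the Theorem is indispensable: the zero blocks in $\overline{L}$ (upper right) and $\overline{U}$ (lower left) are what kill the contributions of the trailing rows/columns to the leading window. I would verify this by the explicit $2\times 2$ block multiplication, checking that the $(1,1)$ block of $(\overline{L}\,\Pi)\,\overline{U}$ depends only on $\overline{L}_{11}$, on the top $k$ rows and left $t$ columns of $\Pi$, and on $\overline{U}_{11}$. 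Everything else is a direct application of the two invariance properties of Lemma~\ref{lem:rankprofiletransf}, once to reduce $A$ to $\Pi$ globally and once to reduce the $(k,t)$ blocks, so no further structural input is needed beyond the triangular shapes already proven.
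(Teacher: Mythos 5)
Your proof is correct and follows essentially the same route as the paper: both write $A=\overline{L}\,P\left[\begin{smallmatrix}I_r\\&0\end{smallmatrix}\right]Q\,\overline{U}$ with $\overline{L},\overline{U}$ the invertible triangular factors from the LEU theorem, observe that truncation to the leading $(k,t)$ block commutes with these triangular multiplications so that the leading block of $A$ is the leading $k\times k$ block of $\overline{L}$ times the leading block of $P\left[\begin{smallmatrix}I_r\\&0\end{smallmatrix}\right]Q$ times the leading $t\times t$ block of $\overline{U}$, and then invoke Lemma~\ref{lem:rankprofiletransf}. The only cosmetic difference is that the paper verifies the commutation via the projectors $\begin{bmatrix}I_k&0\end{bmatrix}$ and $\begin{bmatrix}I_t\\0\end{bmatrix}$ rather than an explicit $2\times 2$ block multiplication.
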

\begin{proof}With the notations of corollary~\ref{cor:leu}, we have:
  \begin{equation*}
    A= P 
  \begin{bmatrix}
    L\\M&I_{m-r}
  \end{bmatrix}
  \begin{bmatrix}
    I_r\\&0
  \end{bmatrix}
  \begin{bmatrix}
    U&V\\&I_{n-r}
  \end{bmatrix}
Q
= \overline{L} P
\begin{bmatrix}
  I_r\\&0
\end{bmatrix}
Q\overline{U}
 \end{equation*}
Hence $$
    \begin{bmatrix}
      I_k&0
    \end{bmatrix}
    A
    \begin{bmatrix}
      I_t\\0
    \end{bmatrix}
= \overline{L_1}    
\begin{bmatrix}
  I_k&0
\end{bmatrix}
P
\begin{bmatrix}
  I_r\\&0
\end{bmatrix}
Q\overline{U_1},
$$
where $\overline{L_1}$ is the $k\times k$ leading submatrix of $\overline{L}$
(hence it is an invertible lower triangular matrix) and $\overline{U_1}$ is the
  $t\times t$ leading submatrix of $\overline{U}$ (hence it is an invertible
  upper triangular matrix). 
Now, Lemma~\ref{lem:rankprofiletransf} implies  that the rank profile of 
$
\begin{bmatrix}
  I_k&0
\end{bmatrix}A
\begin{bmatrix}
  I_t\\0
\end{bmatrix}$
is that of 
$
\begin{bmatrix}
  I_k&0
\end{bmatrix}
P
\begin{bmatrix}
  I_r\\&0
\end{bmatrix}
Q
\begin{bmatrix}
  I_t\\0
\end{bmatrix}$.
\end{proof}


From this lemma we deduce how to compute the row and column rank profiles of any
$(k,t)$ leading submatrix and more particularly of the matrix $A$ itself.

\begin{corrolary}
  Let $A=PLUQ$ be the \pluq decomposition of a $m\times n$ matrix computed by  algorithm~\ref{alg:pluq:4rec}.
  The row (resp. column) rank profile of any $(k,t)$-leading submatrix of a
   $A$ is the sorted sequence of the row
  (resp. column) indices of the non zero rows (resp. columns) in the matrix 
$$  R=\begin{bmatrix} I_k& 0 \end{bmatrix}
  P
  \begin{bmatrix} I_r\\&0\end{bmatrix}
  Q
  \begin{bmatrix} I_t\\0 \end{bmatrix}
  $$
\end{corrolary}

\begin{corrolary}
  The row (resp. column) rank profile of $A$ is the sorted sequence of row
  (resp. column) indices of the non zero rows (resp. columns) of the first $r$
  columns of $P$ (resp. first $r$ rows of $Q$).
\end{corrolary}

\section{Complexity analysis}\label{sec:comp}

We study here the time complexity of algorihtm~\ref{alg:pluq:4rec} by counting
the number of field operations.
For the sake of simplicity, we will assume here that the dimensions $m$ and $n$
are powers of two. The analysis can easily be extended to the general case for
arbitrary $m$ and $n$.

For $i=1,2, 3, 4$ we denote by $T_i$ the cost of the $i$-th recursive call to
\pluq, on a $\frac{m}{2}\times\frac{n}{2}$ matrix of rank $r_i$.
We also denote by $T_\trsm(m,n)$ the cost of a call \trsm on a rectangular
matrix of dimensions $m\times n$, and by $T_\MM(m,k,n)$ the cost of multiplying
an $m\times k$ by an $k\times n$ matrix.

\begin{theorem}
  Algorithm~\ref{alg:pluq:4rec}, run on an $m\times n$ matrix of rank
  $r$, performs $\GO{mnr^{\omega-2}}$ field operations.
\end{theorem}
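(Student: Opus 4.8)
The plan is to set up a cost recurrence reflecting the four recursive calls and the intermediate \trsm, \MM\ and permutation updates, and then to close it by induction on $m+n$ against the ansatz $T(m,n,r)\le C\,mnr^{\omega-2}$ for a constant $C$ to be fixed. The structural facts I would extract from the algorithm and its correctness proof are that the recursive calls act on $\frac{m}{2}\times\frac{n}{2}$ blocks of respective ranks $r_1,r_2,r_3,r_4$, that $r=r_1+r_2+r_3+r_4$, and that each $r_i\le\min(\frac{m}{2},\frac{n}{2})$, so in particular $r\le\min(m,n)$. The recurrence then reads $T(m,n,r)=\sum_{i=1}^4 T_i + U(m,n,r)$, where $U$ collects all the non-recursive work.

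First I would bound $U$. Using the standard tiling estimate $T_\MM(m,k,n)=\GO{mnk^{\omega-2}}$, valid when the contracted dimension satisfies $k\le\min(m,n)$, together with $T_\trsm(k,l)=\GO{l\,k^{\omega-1}}$ for a $k\times k$ triangular system with $k\le l$ columns of right-hand side, I would check that in every update the contracted (for \MM) or triangular (for \trsm) dimension equals some $r_i\le r$, while the two outer dimensions are bounded by $\frac{m}{2}$ and $\frac{n}{2}$. For instance $H\leftarrow A_4-ED$ has contracted dimension $r_1$ and outer dimensions at most $\frac{m}{2},\frac{n}{2}$, so it costs $\GO{mnr_1^{\omega-2}}=\GO{mnr^{\omega-2}}$ since $\omega\ge2$; and $D\leftarrow L_1^{-1}B_1$ costs $\GO{n\,r_1^{\omega-1}}=\GO{mnr^{\omega-2}}$ using $r_1\le r\le m$. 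As there are only a constant number of such \trsm\ and \MM\ calls and the permutations cost $\GO{mn}$, I would conclude $U(m,n,r)=\GO{mnr^{\omega-2}}$; write $K$ for the implied constant.

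It remains to close the induction. By the hypothesis each recursive call obeys $T_i\le C\frac{mn}{4}r_i^{\omega-2}$. Since $\omega\in(2,3]$ the exponent $p=\omega-2$ lies in $(0,1]$, so $x\mapsto x^{p}$ is concave and Jensen's inequality gives $\sum_{i=1}^4 r_i^{p}\le 4^{1-p}r^{p}=4^{3-\omega}r^{\omega-2}$. Hence $\sum_i T_i\le C\,mn\,r^{\omega-2}\,4^{2-\omega}$, and the full recurrence yields $T(m,n,r)\le\bigl(C\,4^{2-\omega}+K\bigr)mnr^{\omega-2}$. Because $\omega>2$ we have $4^{2-\omega}<1$, so choosing $C\ge K/(1-4^{2-\omega})$ makes the bracket at most $C$ and the induction goes through; the base cases $m=1$ and $n=1$ cost $\GO{n}$ and $\GO{m}$ respectively, well within the bound.

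The hard part will be the verification in the second paragraph: one must confirm, case by case for the roughly dozen \trsm\ and \MM\ updates appearing after each permutation step, that the contracted dimension is always one of the $r_i$ (never a full half-dimension) and that the remaining two dimensions are genuinely $\GO{m}\times\GO{n}$ --- this is precisely where rank-sensitivity is won or lost. A secondary subtlety is that the slack factor $1-4^{2-\omega}$ is strictly positive only for $\omega>2$; at the conjectural value $\omega=2$ the argument as stated would instead yield a logarithmic overhead, so I would run the analysis under the standing assumption $\omega>2$, which holds for every known matrix multiplication exponent.
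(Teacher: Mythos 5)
Your proposal is correct and follows essentially the same route as the paper: the same recurrence over the four recursive calls plus $\GO{mnr^{\omega-2}}$ update work, the same superadditivity bound $\sum_{i=1}^4 r_i^{\omega-2}\le 4^{3-\omega}r^{\omega-2}$ (the paper phrases it as $a^{\omega-2}+b^{\omega-2}\le 2^{3-\omega}(a+b)^{\omega-2}$ applied twice), and the same induction with $C\ge K/(1-2^{4-2\omega})$ under the standing assumption $\omega>2$. The only difference is that the paper explicitly lists the \trsm and \MM cost terms showing each contracted dimension is some $r_i$, the verification you correctly identify as the crux but only sample.
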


\begin{proof}
Let $T=T_\pluq(m,n,r)$ be the cost of algorithm~\ref{alg:pluq:4rec} run on a
$m\times n$ matrix of rank $r$. From the complexities of the subroutines given, e.g., in~\cite{jgd:2008:toms} and the recursive calls in algorithm~\ref{alg:pluq:4rec}, we have:
\begin{eqnarray*}
  T &=& T_1+T_2+T_3+T_4
 + T_{\trsm}(r_1, \mm) + T_{\trsm}(r_1, \nn)\\
&&+ T_{\trsm}(r_2, \mm) + T_{\trsm}(r_3, \nn) +T_\MM(\mm-r_1,r_1,\nn)\\
&& +T_\MM(\mm,r_1,\nn-r_1) +T_\MM(\mm,r_1,\nn)\\
&&+ T_\MM(r_3,r_2,\nn-r_2)+ T_\MM(\mm-r_3,r_2,\nn-r_2-r_4)\\
&& +T_\MM(\mm-r_3,r_3,\nn-r_2-r_4)\\
&\leq& T_1+T_2+T_3+T_4
+ K\left(\mm(r_1^{\omega-1}+r_2^{\omega-1})+\nn(r_1^{\omega-1}\right.\\
&&\left.+ r_3^{\omega-1}) + \mm \nn r_1^{\omega-2} + \mm \nn
  r_2^{\omega-2} + \mm \nn r_3^{\omega-2}\right)\\ 
&\leq&  T_1+T_2+T_3+T_4
  + K'mnr^{\omega-2} 
\end{eqnarray*}

for some constants $K$ and $K'$ (we recall that $a^{\omega-2}+b^{\omega-2} \leq
2^{3-\omega}(a+b)^{\omega-2}$ for $2\leq \omega\leq 3$).

Let $C=max\{\frac{K'}{1-2^{4-2\omega}};1\}$.
Then we can prove by a simultaneous induction on $m$ and $n$ that 
$ T \leq Cmnr^{\omega-2}$.

Indeed, if $(r=1,m=1,n\geq m)$ or $(r=1,n=1,m\geq n)$ then $T\leq m-1\leq Cmnr^{\omega-2}$.
Now if it is true for $m=2^j,n=2^i$, then for $m=2^{j+1},n=2^{i+1}$, we have
 \begin{eqnarray*}
 T &\leq& \frac{C}{4}mn(r_1^{\omega-2}+r_2^{\omega-2}+r_3^{\omega-2}+r_4^{\omega-2}) +K'mnr^{\omega-2} \\
   &\leq& \frac{C(2^{3-\omega})^2}{4}mnr^{\omega-2}   +K'mnr^{\omega-2} \\
   &\leq& K'\frac{2^{4-2\omega}}{1-2^{4-2\omega}}mnr^{\omega-2}   +K'mnr^{\omega-2} \leq Cmnr^{\omega-2}.
 \end{eqnarray*}
\end{proof}

In order to compare this algorithm with usual Gaussian elimination algorithms,
we now refine the analysis to compare the leading constant of the time
complexity in the special case where the matrix is square and has a generic rank
profile: $r_1=\mm=\nn, r_2=0,  r_3=0$ and $r_4=\mm=\nn$ at each recursive step. 

Hence we have
\begin{eqnarray*}
  T_\pluq &=& 2T_\pluq(\nn,\nn,\nn) + 2T_\trsm(\nn,\nn)+ T_\MM(\nn,\nn,\nn)\\
         &=&  2T_\pluq(\nn,\nn,\nn) +
  2\frac{C_\omega}{2^{\omega-1}-2}\left(\nn\right)^\omega + C_\omega\left(\nn\right)^\omega
\end{eqnarray*}

Writing $T_\pluq(n,n,n) = \alpha n^\omega$, the constant $\alpha$ satisfies:   
$$
\alpha=C_\omega\frac{1}{(2^{\omega}-2)}\left(\frac{1}{2^{\omega-2}-1}+1\right) = C_\omega\frac{2^{\omega-2}}{(2^{\omega}-2)(2^{\omega-2}-1)}.
$$
which is equal to the constant of the CUP and LUP decompositions~\cite[Table
  1]{JPS:2011}. In particular, 
it equals $2/3$ when $\omega=3, C_\omega=2$, matching the constant of the classical
Gaussian elimination.


\section{Number of modular reductions over a prime field}
\label{sec:mod}

In the following we suppose that the operations are done with full
delayed reduction for a single multiplication and any number of
additions: operations of the form $\sum a_i b_i$ are reduced only once
at the end of the addition, but $a \cdot b \cdot c$ requires two
reductions.
In practice, only a limited amount of accumulations can be done on an
actual mantissa without overflowing, but we neglect this in this
section for the sake of simplicity. 
See e.g. \cite{jgd:2008:toms} for more details. 
For instance, with this model, the number of reductions required by a
classic multiplication of matrices of size $m\times k$ by $k\times n$
is simply: $m\cdot n$. We denote this by $R_{MM}(m,k,n)=mn$.
This extends e.g. also for triangular solving:

\begin{theorem}\label{thm:trsm}
Over a prime field modulo $p$, the number of reductions modulo $p$ required by
$TRSM(m,n)$ with full delayed reduction is:
$$
\begin{array}{lcll}
R_\text{UnitTRSM}(m,n)&=&mn& \text{ if the triangular matrix is
  unitary,}\\
R_\text{TRSM}(m,n)&=&2mn& \text{ in general.}
\end{array}
$$
\customvspace{-5pt}
\end{theorem}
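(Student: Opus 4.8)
The plan is to count modular reductions in the \trsm routine by recursing on its block structure, matching the recursion used to implement \trsm itself. Recall that \trsm solves $B \leftarrow U^{-1}B$ (or $B \leftarrow BL^{-1}$) for an $n\times n$ triangular matrix and an $m\times n$ (or $n\times m$) right-hand side. The standard block-recursive implementation splits the triangular matrix into $2\times 2$ triangular blocks, makes two recursive \trsm calls on the two diagonal triangular halves, and performs one matrix multiplication (an \MM update) to couple them. I would set up the recurrence for $R_\text{TRSM}(m,n)$ from exactly this decomposition and solve it, treating the unitary and general cases separately because they differ only in how the diagonal coefficients are handled.

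First I would establish the base case. For a $1\times 1$ triangular system with an $m\times 1$ right-hand side: if the triangular matrix is unitary (the diagonal entry is $1$), then $B \leftarrow U^{-1}B$ is the identity and no multiplication, hence no reduction, is needed along the diagonal — but one must account for the off-diagonal updates in the general recursion. In the general (non-unitary) base case, each of the $m$ entries of $B$ must be multiplied by the inverse of the diagonal pivot, costing $m$ reductions; so the scalar case already exhibits the factor distinguishing the two formulas. I would state the base cases as $R_\text{UnitTRSM}(m,1)=0$ versus $R_\text{TRSM}(m,1)=m$ (up to the convention for counting the single scalar division) and verify they are consistent with the claimed closed forms $mn$ and $2mn$ when $n=1$.

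Next I would write the recurrence. Splitting $n = n_1 + n_2$ (say $n_1=n_2=n/2$ for powers of two), the recursive implementation performs one \trsm of size $(m,n_1)$, then one \MM update of the form $\MM$ on an $m\times n_1$ by $n_1\times n_2$ product contributing $R_{MM}(m,n_1,n_2)=m n_2$ reductions (using the stated $R_{MM}(m,k,n)=mn$ and the full-delayed-reduction model, in which the whole inner product is reduced once), then one \trsm of size $(m,n_2)$. This gives, in the unitary case, $R_\text{UnitTRSM}(m,n) = R_\text{UnitTRSM}(m,n_1) + m n_2 + R_\text{UnitTRSM}(m,n_2)$, and analogously in the general case with the unit recurrence replaced by the general one. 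I would then verify by induction that $R_\text{UnitTRSM}(m,n)=mn$ and $R_\text{TRSM}(m,n)=2mn$ satisfy these recurrences: the substitution is a short arithmetic check, since $m n_1 + m n_2 + m n_2 = mn + m n_2$ must be reconciled against the closed form, which is precisely where the accounting of the delayed-reduction model must be pinned down.

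The main obstacle I anticipate is the precise bookkeeping of which operations force a reduction under the full-delayed-reduction model, rather than the recurrence algebra itself. The subtlety is that an expression $\sum a_i b_i$ is reduced only once, so the \MM coupling step contributes exactly $m n_2$ reductions regardless of the inner dimension, whereas a chained computation $a\cdot b\cdot c$ would force two. In the general (non-unitary) \trsm, each solved entry is the delayed accumulation $\sum a_i b_i$ followed by a division by the pivot, and the crucial point is whether this counts as one reduction or two per entry; it is this extra pivot-division reduction, incurred once per entry of $B$, that doubles the count from $mn$ to $2mn$. I would make this explicit at the base case and confirm it propagates correctly, so that the factor of $2$ in $R_\text{TRSM}$ is traced cleanly to the non-unit diagonal divisions. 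I do not expect the induction to present any difficulty once the per-entry reduction count is fixed.
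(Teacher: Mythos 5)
Your plan diverges from the paper's proof in a way that leaves the decisive point unresolved. The paper does not recurse at all: it counts directly, per entry of the output. Under full delayed reduction each of the $mn$ entries of the solution is one accumulation of the form $\sum a_i b_i$, reduced exactly once after the update of its row, giving $mn$; in the non-unit case each entry is additionally multiplied by the precomputed inverse of its diagonal pivot before that update, and since $a\cdot b\cdot c$ costs two reductions in this model, this adds exactly one more reduction per entry, hence $2mn$. Your recurrence, by contrast, charges $mn_2$ reductions to the \MM coupling step \emph{and} charges the two recursive calls separately. As you yourself observe, $R(m,n_1)+mn_2+R(m,n_2)$ evaluated at the claimed closed form gives $mn+mn_2\neq mn$, and your proposed base cases ($0$ and $m$) are likewise inconsistent with $mn$ and $2mn$ at $n=1$. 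Solving your recurrence honestly yields $R_\text{UnitTRSM}(m,n)=\tfrac{mn}{2}\log_2 n$, which is asymptotically larger than $mn$, so the induction you propose cannot close.

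The "reconciliation" you defer to the end is therefore not a bookkeeping detail but the entire content of the theorem: one must argue that the reductions attributed to the \MM update are themselves delayed and fused with the accumulations of the subsequent triangular solve, so that each output entry is still reduced only once overall. Once that is said, the block recursion buys you nothing and you are back to the paper's direct per-entry count; without it, your argument proves a strictly weaker (logarithmically larger) bound. To repair the proof, drop the recurrence and argue entry by entry: each result entry is produced by a single fully delayed inner-product accumulation (one reduction), preceded in the general case by one multiplication by the inverted pivot (one further reduction), which is precisely where the factor of $2$ comes from.
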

\begin{proof} 
If the matrix is unitary, then a fully delayed reduction is required
only once after the update of each row of the result.
In the generic case, we invert each diagonal element first and
multiply each element of the right hand side by this inverse diagonal
element, prior to the update of each row of the result. This gives
$mn$ extra reductions. 
\end{proof}

Next we show that the new pivoting strategy is more efficient in terms
of number of integer division. 
\begin{theorem} Over a prime field modulo $p$ and on a full-rank
  square $m\times m$ matrix with generic rank profile, and $m$ a power
  of two, the number of reductions modulo $p$ required by the
 elimination algorithms with full delayed reduction is:
\customvspace{-5pt}
$$  \begin{array}{lcl}
  R_{\pluq}(m,m) &=&2m^2+\po{m^2},\\
  R_{\ple}(m,m)=R_{\cupd}(m,m) &=&\left(1+\frac{1}{4}\log_2(m)\right)m^2+\po{m^2}
  \end{array}
$$
\customvspace{-10pt}
\end{theorem}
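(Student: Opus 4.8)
The plan is to set up and solve a reduction-count recurrence for each algorithm in the generic full-rank square case, reusing the same block structure and the values $r_1=r_4=\frac m2$, $r_2=r_3=0$ already identified in the complexity analysis of Section~\ref{sec:comp}.

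For \pluq I would first observe that with $r_1=r_4=\frac m2$ and $r_2=r_3=0$ the blocks $M_1,V_1,F,G$ together with the whole second group of updates ($I,J,K,N,O$ and the $KV_2,M_3O$ corrections) all have a zero dimension, hence require no reduction; the only operations left at the top node are the unit triangular solve $D\leftarrow L_1^{-1}B_1$, the general triangular solve $E\leftarrow C_1U_1^{-1}$, the product $H\leftarrow A_4-ED$, and two recursive calls on the square full-rank blocks $A_1$ and $R=H$ (the calls on $F$ and $G$ being trivial). Using Theorem~\ref{thm:trsm} and $R_{MM}(\frac m2,\frac m2,\frac m2)=(\frac m2)^2$, the non-recursive cost is $(\frac m2)^2+2(\frac m2)^2+(\frac m2)^2=m^2$, so
$$R_{\pluq}(m,m)=2R_{\pluq}(\tfrac m2,\tfrac m2)+m^2.$$
Unrolling this (writing $m=2^k$, the forcing term $4^{k}$ does not resonate with the homogeneous solution $2^k$) yields $R_{\pluq}(m,m)=2m^2+\po{m^2}$.

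For \cupd and \ple I would recall from~\cite{JPS:2011} that a single recursion node splits one dimension in half, performs one unit \trsm and one \MM update to form the Schur complement, and then recurses on the panel and on the Schur complement. In the generic full-rank case this gives, for an $m\times n$ block, one unit solve of cost $(\frac n2)^2$ and one product of cost $(m-\frac n2)\frac n2$, whose sum is $\frac{mn}2$, together with recursive calls on an $m\times\frac n2$ block and on an $(m-\frac n2)\times\frac n2$ block. Hence
$$R_{\cupd}(m,n)=R_{\cupd}(m,\tfrac n2)+R_{\cupd}(m-\tfrac n2,\tfrac n2)+\tfrac{mn}2,$$
with base case $R_{\cupd}(m,1)=m$. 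Since the reduction model is transpose-invariant ($R_{MM}$ and $R_\text{UnitTRSM}$ are symmetric under transposition), \ple obeys the same recurrence, so $R_{\ple}=R_{\cupd}$. Solving the recurrence on the diagonal $m=n=2^k$ then gives $\left(1+\frac14\log_2 m\right)m^2+\po{m^2}$.

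The main obstacle is the resolution of this two-variable \cupd recurrence. The Schur-complement call shrinks the row dimension to $m-\frac n2$, so the sub-problems become genuinely rectangular and the recursion does not collapse to a clean one-variable form. A careless asymptotic analysis that tracks only the coefficient of $m$ in $R_{\cupd}(m,2^j)$ predicts a spurious $\frac12\log_2 m$ term; the correct $\frac14$ coefficient appears only once the lower-order (constant-in-$m$) contributions, which accumulate precisely along the diagonal $m=n$, are retained. I would therefore avoid the shortcut and instead establish the closed form directly, e.g. by proving $R_{\cupd}(m,2^j)=a_jm-b_j$ with $a_j=2a_{j-1}+2^{j-1}$ and $b_j=2b_{j-1}+a_{j-1}2^{j-1}$, solving these to get $a_j=2^j(1+\tfrac j2)$ and $b_j=j\,4^{j-1}$, and finally evaluating at $m=2^k=n$ to obtain $a_k2^k-b_k=(1+\tfrac k4)4^k=\left(1+\frac14\log_2 m\right)m^2$.
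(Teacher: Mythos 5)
Your proof is correct. The \pluq half coincides with the paper's argument: the same recurrence $R_{\pluq}(m)=2R_{\pluq}(\frac{m}{2})+m^2$, obtained from one unit \trsm, one general \trsm and one square product once the anti-diagonal quadrants are seen to be empty (the paper solves it exactly to $2m^2-2m$). The \ple/\cupd half, however, takes a genuinely different route. The paper uses the row-splitting \cupd recursion and charges the pivot normalizations to a non-unit \trsm, giving $R(2m,n)=R(m,n)+R(m,n-m)+m(n+m)$ with base case $R(1,n)=0$; it then posits the two-variable form $\log_2(m)(\frac{mn}{2}-\frac{m^2}{4})+m^2+\po{mn+m^2}$, verifies it by induction on the row dimension, and specializes to $n=m$. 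You instead use the column-splitting recursion with a unit \trsm and charge the normalizations to the base case $R(m,1)=m$, yielding $R(m,n)=R(m,\frac{n}{2})+R(m-\frac{n}{2},\frac{n}{2})+\frac{mn}{2}$; your ansatz $R(m,2^j)=a_jm-b_j$ closes this exactly, and the evaluation $a_k2^k-b_k=(1+\frac{k}{4})4^k$ is correct. The two models place the division-by-pivot reductions at different nodes of the recursion tree and so differ by $O(m)$ in the exact count, but they agree on both the $\frac{1}{4}\log_2(m)\,m^2$ and the $m^2$ terms, so either establishes the statement as claimed (note that the unit-versus-general \trsm choice does shift the constant in front of $m^2$ by $\frac{1}{2}$, so your base case $R(m,1)=m$ is not a cosmetic detail but exactly what restores the leading $1$). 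Your warning about the spurious $\frac{1}{2}\log_2(m)$ coefficient is a real pitfall that the paper sidesteps by carrying the $-\frac{m^2}{4}\log_2(m)$ term explicitly in its induction hypothesis. What your approach buys is an exact closed form along the diagonal with no error term; what the paper's buys is a formula valid for all rectangular shapes $(m,n)$.
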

\begin{proof}
 If the top left square block is full rank then \pluq reduces to
  one recursive call, two square TRSM (one unitary, one generic) one
  square matrix multiplication and a final recursive call. In terms of modular reductions, this gives: 
$R_{\pluq}(m)=2R_{\pluq}(\frac{m}{2})+R_{\texttt{UnitTRSM}}(\frac{m}{2},\frac{m}{2})+R_{\trsm}(\frac{m}{2},\frac{m}{2})+R_{\MM}(\frac{m}{2},\frac{m}{2},\frac{m}{2})$.
Therefore, using theorem~\ref{thm:trsm}, the number of reductions
within \pluq satisfies $T(m)=2T(\frac{m}{2})+m^2$ so that
it is $R_{PLUQ}(m,m) =2m^2-2m$ if $m$ is a power of two.

 For row or column oriented elimination this situation is more
  complicated since the recursive calls will always be
  rectangular even if the intermediate matrices are full-rank.
We in fact prove, by induction on $m$, the more generic:
\begin{equation}\label{eq:rcup}
R_{\ple}(m,n)=\log_2(m)(\frac{mn}{2}-\frac{m^2}{4})
            +m^2+\po{mn+m^2}
\end{equation}
First $R_{\ple}(1,n)=0$ since $[1]\times[a_1,\ldots,a_n]$ is a triangular decomposition of the $1\times n$ matrix $[a_1,\ldots,a_n]$.
Now suppose that Equation~\ref{eq:rcup} holds for $k = m$.
Then we follow the row oriented algorithm of \cite[Lemma 5.1]{jgd:2008:toms} which makes two recursive calls, one \trsm and one \MM to get  $R_{\ple}(2m,n)=R_{\ple}(m,n)+R_{\ple}(m,m)+R_{\MM}(m,m,n-m)+R_{\ple}(m,n-m)=R_{\ple}(m,n)+R_{\ple}(m,n-m)+m(n+m)$. 
We then apply the induction hypothesis on the recursive calls to get
\begin{eqnarray*}
R_{\ple}(2m,n)&=&\frac{1}{2}\log_2(m)mn-\frac{1}{4}\log_2(m)m^2+m^2+\\
&&\frac{1}{2}\log_2(m)m(n-m)-\frac{1}{4}\log_2(m)m^2+m^2+\\
&&m(n+m)+\po{mn+m^2}\\
& =& \log_2(m)(mn-m^2)+3m^2+mn+\po{mn+m^2}.
\end{eqnarray*}
The latter is also obtained by substituting $k\hookleftarrow 2m$ in Equation~\ref{eq:rcup} so that the induction is proven.
\customvspace{-15pt}\end{proof}

This show that the new algorithm requires much less modular reductions, as soon
as $m$ is larger than $32$. Over finite fields, since reductions can be much
more expensive than multiplications or additions by elements of the field, this
is a non negligible advantage. We show in the next section that this
participates to the better practical performance of the \pluq algorithm.

\section{A base case  algorithm}\label{sec:base}

We propose in algorithm~\ref{alg:pluq:iter} an iterative algorithm computing the same PLUQ decomposition as
algorithm~\ref{alg:pluq:4rec}. 
The motivation is to offer an alternative to the recursive algorithm 
improving the computational efficiency on small matrix sizes. 
Indeed, as long as the matrix fits the cache memory, the amount of
page faults of the two variants are similar, but the iterative
algorithm reduces the amount of row and column permutations.
The block recursive algorithm can then be modified so that it switches to the
iterative algorithm whenever the matrix dimensions are below a certain
threshold.

Unlike the common Gaussian elimination, where pivots are searched in the whole
current row or column, the strategy is here to proceed with an incrementally growing
leading sub-matrix. This implies a Z-curve type search scheme, as shown on
figure~\ref{fig:iter}.
This search strategy is meant to ensure the properties on the rank profile that
have been presented in section~\ref{sec:rp}.
\begin{figure}[h]
\begin{center}
  \includegraphics[width=.52\linewidth]{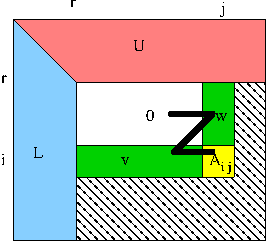}
\end{center}
  \caption{Iterative base case PLUQ decomposition}
\label{fig:iter}  
\end{figure}

{\scriptsize
\begin{algorithm}[H]
  \caption{\pluq iterative base case}
  \label{alg:pluq:iter}
\begin{algorithmic}
\Require{$A$ a $m\times n$ matrix over a field}
\Ensure{$P,Q$:  $m\times m$ and $n\times n$ permutation matrices}
\Ensure{$r$: the rank of $A$}
\Ensure{$A \leftarrow
\begin{bmatrix}
    L \backslash U & V\\
    M              &0
  \end{bmatrix}$ where $L$ is $r\times r$ unit lower triang., $U$ is $r\times
  r$ upper triang. and such that $A=
  P
  \begin{bmatrix}
    L\\M
  \end{bmatrix}
  \begin{bmatrix}
    U&V
  \end{bmatrix}
  Q
  $.
}

\State $r \leftarrow 0; i \leftarrow 0; j \leftarrow 0$
\While{$i<m$ or $j<n$}
\State \Comment{Let $v = \begin{bmatrix} A_{i,r}&\ldots&A_{i,j-1} \end{bmatrix}$ and
      $w = \begin{bmatrix} A_{r,j}&\ldots&A_{i-1,r} \end{bmatrix}^T$}
    \If{$j<n$ and $w \neq 0$}
        \State $p\leftarrow$ row index of the first non zero entry in $w$
        \State $q\leftarrow j; j\leftarrow \max(j+1,n)$
    \ElsIf{$i<m$ and $v \neq 0$}
        \State $q\leftarrow $ column index of the first non zero entry in $v$
        \State $p\leftarrow i; i\leftarrow \max(i+1,m)$
    \ElsIf{$i<m$ and $j<n$ and $A_{i,j} \neq 0$}
    \State $(p,q)\leftarrow (i,j)$
    \State $i\leftarrow \max(i+1,m); j\leftarrow \max(j+1,n)$
    \Else
    \State $i\leftarrow \max(i+1,m); j\leftarrow \max(j+1,n)$
    \State continue
    \EndIf
    \Comment{At this stage, $A_{p,q}$ is a pivot}
    \For{$k=p+1\ldots n$}
      \State $A_{k,q} \leftarrow A_{k,p} / A_{p,q}$
      \State $A_{k,q+1\ldots n} \leftarrow A_{k,q+1\ldots n} - A_{k,q}
      A_{p,q+1\ldots n}$
    \EndFor
    \State $A_{r+1\ldots m, r+1} \leftrightarrow A_{r+1\ldots m, q}$
    \Comment{Swap pivot column}
    \State $A_{r+1,r+1\ldots n} \leftrightarrow A_{p,r+1\ldots n}$ \Comment{Swap
      pivot row}
    \State $P\leftarrow T_{p,r} P ; Q\leftarrow Q T_{q,r}$ \Comment{$T_{k,l}$
      swaps indices $k$ and $l$}
    \State $r\leftarrow r+1$
\EndWhile
\end{algorithmic}
\end{algorithm}
}

\begin{remark}
In order to further improve the data locality, this iterative
algorithm can be transformed into a left-looking
variant~\cite{DonDufSorVor98}. We did not write this version here for
the sake of clarity, but this is how we implemented the base case for
the experiments of section~\ref{sec:exp}.
\end{remark}

\section{Experiments}\label{sec:exp}
We present here experiments comparing an implementation of
algorithm~\ref{alg:pluq:4rec} computing a PLUQ decomposition against the
implementation of the CUP/PLE decomposition, called \texttt{LUdivine} in the 
\texttt{FFLAS-FFPACK} library\footnote{\url{http://linalg.org/fflas-ffpack}}.
The new implementation of the PLUQ
decomposition is available in this same library from version svn@346.
We ran our tests on a single core of an Intel Xeon
E5-4620@2.20GHz using gcc-4.7.2.

Figures~\ref{fig:densefull} and~\ref{fig:densedeficient}  compare the computation time of
LUdivine, and the new PLUQ algorithm. In figure~\ref{fig:densefull}, the matrices are dense, with full rank. The
computation times are similar, the PLUQ algorithm with base case showing a
slight improvement over LUdivine. 
\begin{figure}[htb]
   \includegraphics[width=\linewidth]{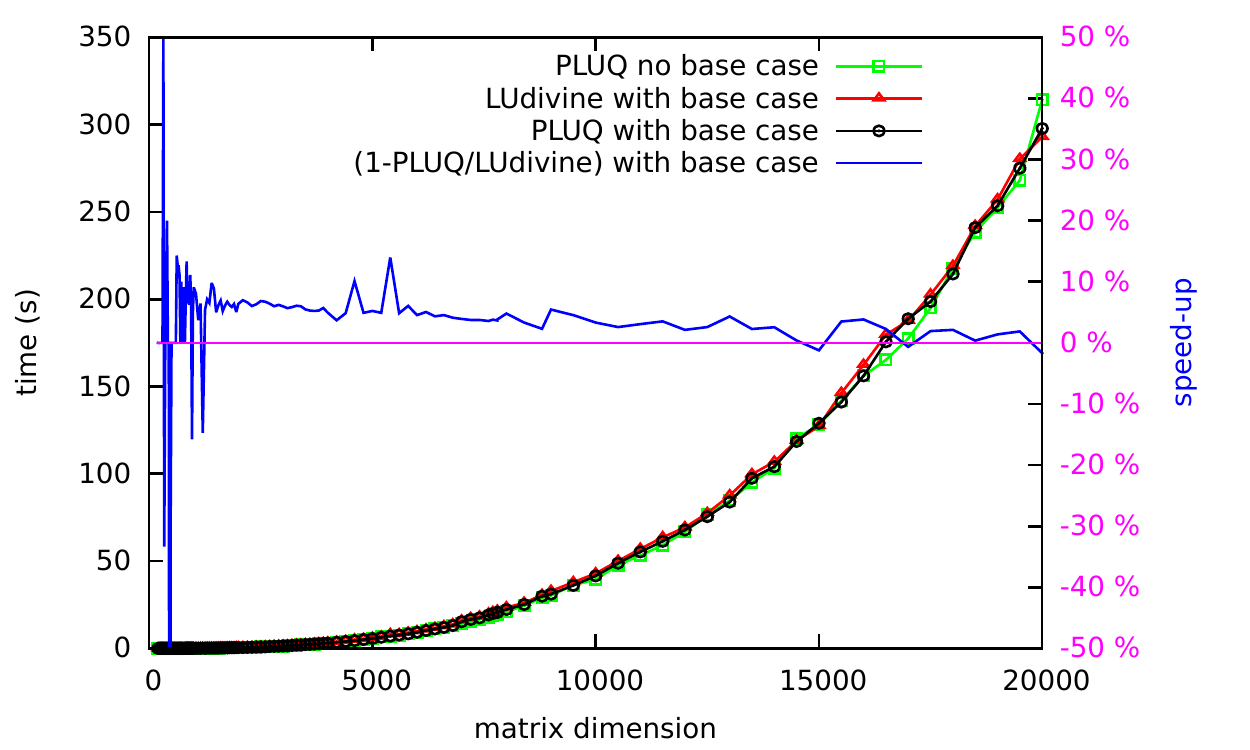}
   \caption{Computation time with dense full rank matrices over $\Z/1009\Z$.}
   \label{fig:densefull}
\end{figure}
\begin{figure}[thb]
   \includegraphics[width=\linewidth]{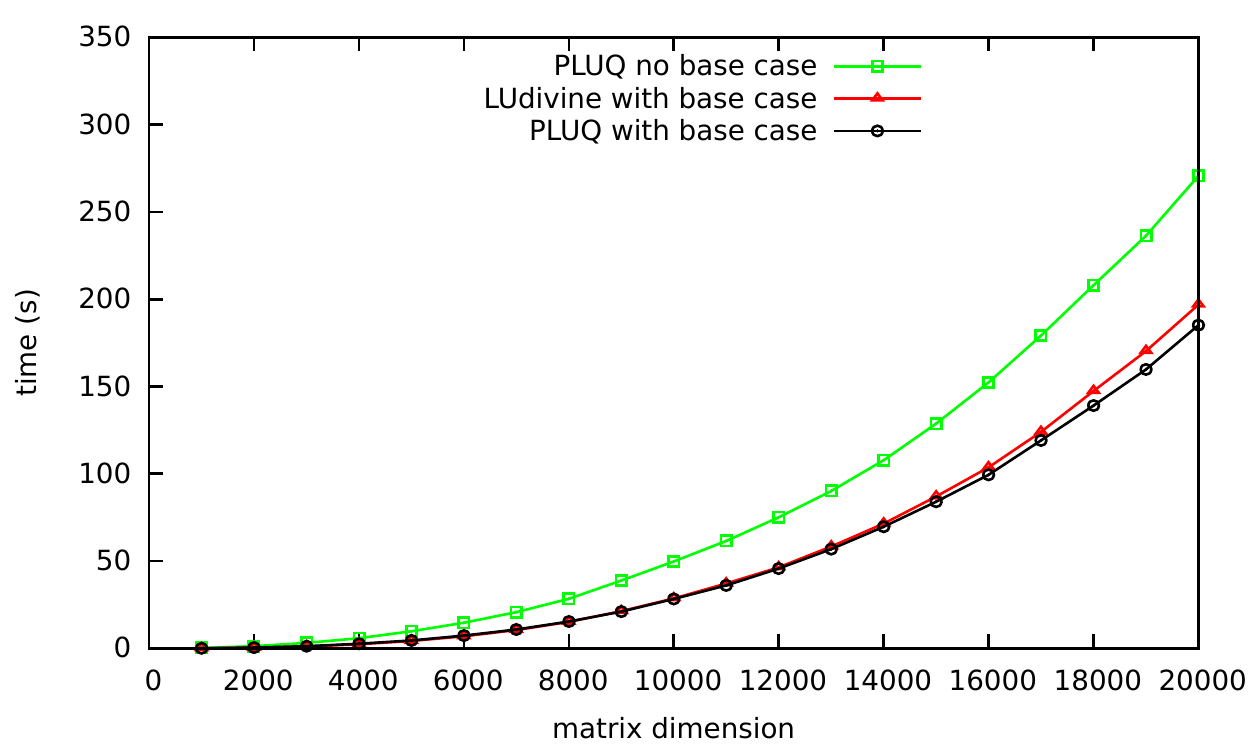}
   \caption{Computation time with dense rank deficient matrices (rank is half
     the dimension)}
   \label{fig:densedeficient}
\end{figure}
\begin{figure}[htb]
   \includegraphics[width=\linewidth]{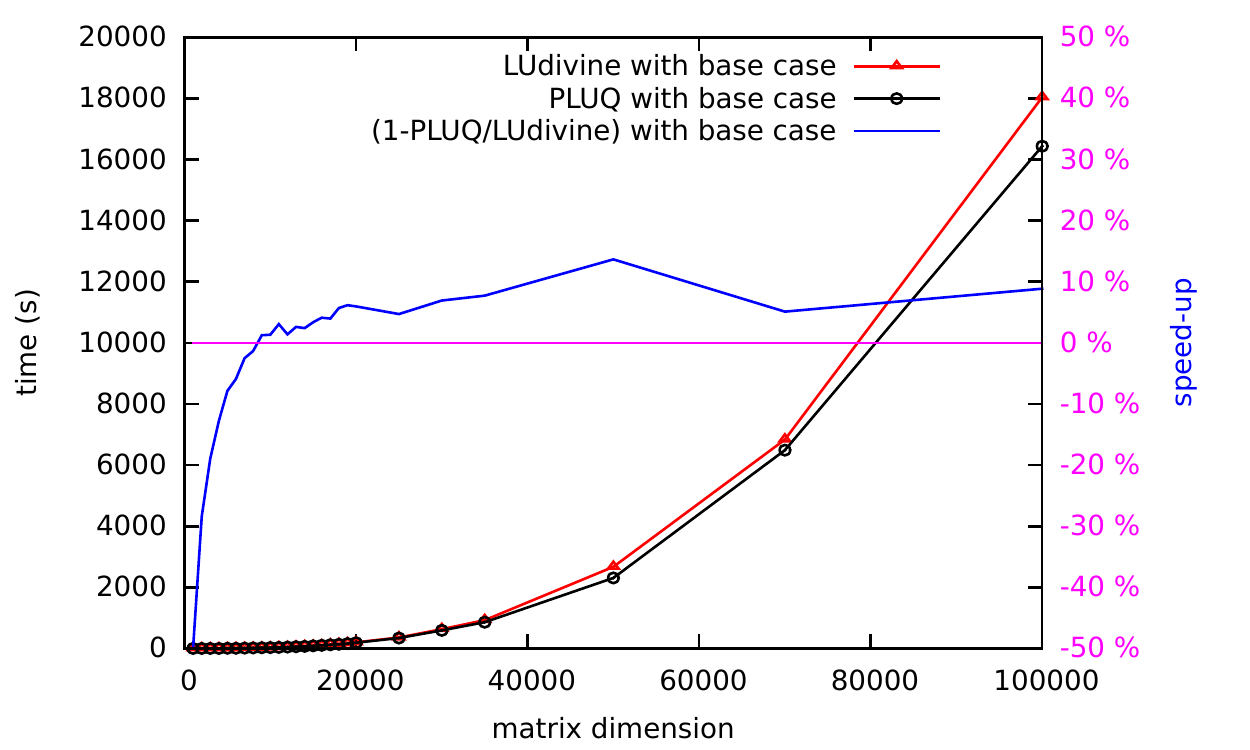}
   \caption{Computation time with dense rank deficient matrices of larger dimension}
   \label{fig:densedeficientlarge}
\end{figure}
In figures ~\ref{fig:densedeficient} and~\ref{fig:densedeficientlarge}, the
matrices are square, dense with a rank equal to half the dimension. To ensure
non trivial row and column rank profiles, they are generated from a LEU
decomposition, where $L$ and $U$ are uniformly random non-singular lower and
upper triangular matrices, and $E$ is zero except on $r=n/2$ positions, chosen
uniformly at random, set to one.
The cutoff dimension for the switch to the base case has been set to an optimal
value of $30$ by experiments. Figure~\ref{fig:densedeficient}
shows how the base case greatly improves the efficiency for PLUQ, presumably for
it reduces the number of row and column permutations. With the base case the computation
time is comparable to LUdivine. More precisely, PLUQ becomes faster than
LUDivine for dimensions above 9000. Figure~\ref{fig:densedeficientlarge} shows
that, on larger matrices, PLUQ can be about 10\% faster than LUdivine.



Table~\ref{tab:cachemisses} summarizes some of the data reported by the callgrind tool
of the valgrind emulator (version 3.8.1) concerning the cache misses. We also
report in the last column the corresponding computation time on the machine (without emulator). 
The matrices used are the same as in figure~\ref{fig:densedeficient}, with
rank half the dimension.
We first  notice the impact of the base case on the PLUQ algorithm: although it
does not change the number of cache misses, it strongly reduces the total number
of memory accesses (less permutations), thus improving the computation time.
Now as the dimension grows, the total amount of memory accesses and
the amount of cache misses plays in favor of PLUQ which becomes faster
than LUdivine. 
\begin{table*}[htbp]
\begin{center}
\begin{tabular}{llrrrrr}
\toprule
 Matrix & Algorithm & Accesses & L1 Misses & LL Misses & Relative  & Timing (s) \\
\midrule
\multirow{3}{*}{A4K} & PLUQ-no-base-case & 1.319E+10 & 7.411E+08 & 1.523E+07 & .115 & 5.84 \\
    & PLUQ-base-case & 8.119E+09 & 7.414E+08 & 1.526E+07 & .188 & 2.65 \\
    & LUdivine & 1.529E+10 & 1.246E+09 & 2.435E+07 & .159 & \textbf{2.35} \\
\midrule
\multirow{3}{*}{A8K} & PLUQ-no-base-case & 6.150E+10 & 5.679E+09 & 1.305E+08 & .212 & 28.4\\
& PLUQ-base-case & 4.072E+10 & 5.681E+09 & 1.306E+08 & .321 & 15.4 \\
 & LUdivine & 7.555E+10 & 9.693E+09 & 2.205E+08 & .292 & \textbf{15.2}\\
\midrule
\multirow{3}{*}{A12K} & PLUQ-no-base-case & 1.575E+11 & 1.911E+10 & 4.691E+08 & .298 & 75.1 \\
& PLUQ-base-case & 1.112E+11 & 1.911E+10 & 4.693E+08 & .422 & \textbf{45.7} \\
 & LUdivine & 2.003E+11 & 3.141E+10 & 7.943E+08 & .396 & 46.4\\
\midrule
\multirow{3}{*}{A16K} & PLUQ-no-base-case & 3.142E+11 & 4.459E+10 & 1.092E+09 & .347 & 152 \\
 & PLUQ-base-case & 2.302E+11 & 4.459E+10 & 1.092E+09 & .475 & \textbf{99.4} \\
 & LUdivine & 4.117E+11 & 7.391E+10 & 1.863E+09 & .452 & 103 \\
\bottomrule
\end{tabular}
\end{center}
\caption{Cache misses for dense matrices with rank equal half of the dimension}  
\label{tab:cachemisses} 
\end{table*}


\section{Conclusion and perspectives}

The decomposition that we propose can first be viewed as an improvement over the
LEU decomposition, introducing a finer treatment of rank deficiency that reduces
the number of arithmetic operations, makes the time complexity rank sensitive
and allows to perform the computation in-place.

Second, viewed as a variant of the existing CUP/PLE decompositions, this new
algorithm produces more information on the rank profile and has better cache
efficiency, as it avoids calling matrix products with rectangular matrices of
unbalanced dimensions. It also  performs fewer modular reductions when computing
over a finite field. 

Overall the new algorithm is also faster in practice than previous
implementations when matrix dimensions get large enough. 

Now, in a parallel setting, it should exhibit more parallelism than row or
column major eliminations since the recursive calls in step 2 and 3 are
independent. This is also the case for the
TURBO algorithm of~\cite{jgd:2002:PComp}, but the latter requires more
arithmetic operations. Further experiments and analysis of communication costs
have to be conducted in shared and distributed memory settings to assess the
possible practical gains in parallel.

{\small
\bibliographystyle{abbrvurl}
\bibliography{pluq}
}
\appendix
\renewcommand{\arraystretch}{.7}
\setlength{\arraycolsep}{.2\arraycolsep}

\section{Correctness of algorithm~1}
\label{app:pluq:correct}

First note that 
$S
\begin{bmatrix}
  L\\M
\end{bmatrix}
=\left[
\begin{array}{cccccc}
  L_1  \\
  M_{11} & L_2 \\
  M_{12} & M_2 &0\\
  \hline
E_1   &  I  &L_3\\
  E_{21} &K_1 & M_{31}  & L_4\\
  E_{22} & K_2& M_{32}  & M_4 & 0 &0\\
    \end{array}\right]
$

Hence
$  P
  \begin{bmatrix}
    L\\M
  \end{bmatrix}
=
  \begin{bmatrix}
    P_1\\&P_3
  \end{bmatrix}
\left[
  \begin{array}{cccccc}
  L_1  \\
  M_1 & P_2
  \begin{bmatrix}
    L_2\\M_2
  \end{bmatrix}\\
  \hline
  E_1   &  I  &L_3\\
  E_{2} &K & M_{3}  & P_4
  \begin{bmatrix}
    L_4\\M_4
  \end{bmatrix}&
  \end{array}
\right]
\label{eq:PL}
$

Similarly,
$
  \begin{bmatrix}
    U&V
  \end{bmatrix}
T =
\left[
  \begin{array}{ccc|ccc}
    U_1 & V_{11} & V_{12}& D_1 & D_{21}& D_{22}\\
        &0      &0     & U_2 & V_{21}& V_{22}\\
        &U_3    & V_3  &  0   & O_1   & O_2\\
        &       &      &    & U_4   & V_4\\
        &       &      &    &       &0\\
  \end{array}
\right]
$
and 
$
  \begin{bmatrix}
    U&V
  \end{bmatrix}
  Q =
\left[
\begin{array}{cc|cccc}
  U_1 & V_1 &D_1 & D_2\\
      & 0   & U_2& V_2 \\
      &\begin{bmatrix}U_3&V_3\end{bmatrix} Q_3 &0 & O\\
      &                 &    &\begin{bmatrix}U_4&V_4\end{bmatrix} Q_4\\
\end{array}
\right]
  \begin{bmatrix}
    Q_1\\&Q_2
  \end{bmatrix}.
$
%

Now as
$H_1=IU_2, H_2=IV_2 + L_3 O, H_3=KU_2$ and
$H_4=KV_2+M_3O+P_4
\begin{bmatrix}
  L_4\\M_4
\end{bmatrix}
\begin{bmatrix}
  U_4&V_4
\end{bmatrix}
Q_4 $
%
we have
\begin{eqnarray*}
    P
  \begin{bmatrix}
    L\\M
  \end{bmatrix}
  \begin{bmatrix}
    U&V
  \end{bmatrix}
Q &=&   \begin{bmatrix}
    P_1\\&P_3
  \end{bmatrix}
\left[
  \begin{array}{cccccc}
  L_1 &   \\
  M_1 & P_2
  \begin{bmatrix}
    L_2\\M_2
  \end{bmatrix}\\
  \hline
  E_1   &  I  &L_3\\
  E_{2} &K & M_{3}  & P_4
  \begin{bmatrix}
    L_4\\M_4
  \end{bmatrix}&
  \end{array}
\right]
\\&&
\left[
\begin{array}{cc|cccc}
  U_1 & V_1 &D_1 & D_2\\
      & 0   & U_2& V_2 \\
      &\begin{bmatrix}U_3&V_3\end{bmatrix} Q_3 &0 & O\\
      &                 &    &\begin{bmatrix}U_4&V_4\end{bmatrix} Q_4\\
\end{array}
\right]
  \begin{bmatrix}
    Q_1\\&Q_2
  \end{bmatrix}
\\&=&   
\begin{bmatrix}
    P_1\\&P_3
  \end{bmatrix}
\left[
  \begin{array}{cccccc}
  L_1 &   \\
  M_1 & P_2
  \begin{bmatrix}
    L_2\\M_2
  \end{bmatrix}\\
  \hline
    E_1 &    & I_{r_3}\\
    E_2 &    & &I_{m-k-r_3}\\
  \end{array}
\right]
\\&&
\left[
\begin{array}{cc|cccc}
  U_1 & V_1 &D_1 & D_2\\
      & 0   & U_2& V_2 \\
      &L_3\begin{bmatrix}U_3&V_3\end{bmatrix} Q_3 &H_1 & H_2\\
      &M_3\begin{bmatrix}U_3&V_3\end{bmatrix} Q_3 &H_3  &H_4\\
\end{array}
\right]
  \begin{bmatrix}
    Q_1\\&Q_2
  \end{bmatrix}\\
&=&\begin{bmatrix}
    P_1\\&I_{m-k}
  \end{bmatrix}
\left[
  \begin{array}{cccccc}
  L_1 &   \\
  M_1 & \\
  \hline
    E &  0  & I_{m-k}\\
  \end{array}
\right]
\left[
\begin{array}{cc|cccc}
  U_1 & V_1 & D\\
      & 0   & F \\
      &G &H
\end{array}
\right]\\&&
  \begin{bmatrix}
    Q_1\\&I_{n-k}
  \end{bmatrix}\\
&=&
\begin{bmatrix}
    P_1\\&I_{m-k}
  \end{bmatrix}
\left[
\begin{array}{cc|c}
  L_1U_1 & L_1V_1 & B_1\\
  M_1U_1    & M_1V_1   & B_2 \\
   C_1 &  C_2  & A_4 \\
\end{array}
\right]
  \begin{bmatrix}
    Q_1\\&I_{n-k}
  \end{bmatrix}\\
&=& A
\end{eqnarray*}

\end{document}